\DeclareMathAlphabet{\mathpzc}{OT1}{pzc}{m}{it}
\def\sA{\mathsf{A}}
\def\sB{\mathsf{B}}
\def\sD{\mathsf{D}}
\def\sK{\mathsf{K}}
\def\sT{\mathsf{T}}
\def\sU{\mathsf{U}}
\def\sV{\mathsf{V}}
\def\sW{\mathsf{W}}
\def\sX{\mathsf{X}}
\def\sY{\mathsf{Y}}
\def\adots{\mathinner{\mkern1mu\raise1.0pt\vbox{\kern7.0pt\hbox{.}}\mkern2mu\raise4.0pt\hbox{.}\mkern2mu\raise7.0pt\hbox{.}\mkern1mu}}
\def\b{\operatorname{b}}
\def\D{\sD}
\def\Df{\D^{\operatorname{f}}}
\def\Flat{\operatorname{Flat}}
\def\GMor{\mathsf{GMor}}
\def\GProj{\mathsf{GProj}}
\def\Hom{\operatorname{Hom}}
\def\inc{\operatorname{inc}}
\def\inj{\operatorname{inj}}
\def\Inj{\operatorname{Inj}}
\def\LTensor{\stackrel{\operatorname{L}}{\otimes}}
\def\Mod{\mathsf{Mod}}
\def\Morph{\mathsf{Mor}}
\def\opp{\operatorname{op}}
\def\prj{\operatorname{prj}}
\def\Prj{\operatorname{Prj}}
\def\prod{\operatorname{prod}}
\def\RHom{\operatorname{RHom}}
\def\s{\operatorname{s}}
\def\tac{\operatorname{tac}}
\numberwithin{equation}{section}
\newtheorem{Lemma}{Lemma}[section]
\newtheorem{Theorem}[Lemma]{Theorem}
\newtheorem{Proposition}[Lemma]{Proposition}
\newtheorem{Corollary}[Lemma]{Corollary}
\theoremstyle{definition}
\newtheorem{Definition}[Lemma]{Definition}
\newtheorem{Setup}[Lemma]{Setup}
\newtheorem{Remark}[Lemma]{Remark}
\begin{document}

\setlength{\parindent}{0pt}
\setlength{\parskip}{7pt}

\title[Symmetric Auslander and Bass categories]{Symmetric Auslander
and Bass categories} 

\author{Peter J\o rgensen}
\address{School of Mathematics and Statistics,
Newcastle University, Newcastle upon Tyne NE1 7RU, United Kingdom}
\email{peter.jorgensen@ncl.ac.uk}
\urladdr{http://www.staff.ncl.ac.uk/peter.jorgensen}

\author{Kiriko Kato}
\address{Department of Mathematics and Information Sciences, Osaka
Prefecture University, Japan}
\email{kiriko@mi.s.osakafu-u.ac.jp}



\keywords{Category of homomorphisms, dualizing complex, Gorenstein
projective homomorphism, Gorenstein projective module, Gorenstein
ring, homotopy category, stable t-structure, totally acyclic complex,
triangle of recollements, upper triangular matrix ring}

\subjclass[2000]{13D25, 18E30, 18G05}

\begin{abstract} 

We define the symmetric Auslander category $\sA^{\s}(R)$ to consist
of complexes of projective modules whose left- and right-tails are
equal to the left- and right-tails of totally acyclic complexes of
projective modules.

The symmetric Auslander category contains $\sA(R)$, the ordinary
Auslander category.  It is well known that $\sA(R)$ is intimately
related to Gorenstein projective modules, and our main result is that
$\sA^{\s}(R)$ is similarly related to what can reasonably be called
Gorenstein projective homomorphisms.  Namely, there is an equivalence
of triangulated categories
\[
  \underline{\GMor}(R)
  \stackrel{\simeq}{\rightarrow} \sA^{\s}(R) / \sK^{\b}(\Prj\,R)  
\]
where $\underline{\GMor}(R)$ is the stable category of Gorenstein
projective objects in the abelian category $\Morph(R)$ of
homomorphisms of $R$-modules.

This result is set in the wider context of a theory for $\sA^{\s}(R)$
and $\sB^{\s}(R)$, the symmetric Bass category which is defined
dually.

\end{abstract}

\maketitle

\setcounter{section}{-1}
\section{Introduction}
\label{sec:introduction}

Let $R$ be a commutative noetherian ring with a dualizing complex $D$.
Such complexes were introduced in \cite[chp.\ V]{H} where it was also
shown that the functor $\RHom_R(-,D)$ is a contravariant
autoequivalence of $\Df(R)$, the finite derived category of $R$.

Some time later, it was shown in \cite[sec.\ 3]{AF} that by
restricting to certain subcategories $\sA(R)$ and $\sB(R)$ of the
derived category $\sD(R)$, the functors $D \LTensor_R -$ and
$\RHom_R(D,-)$ become quasi-inverse covariant equivalences
\[
  \xymatrix{
    \sA(R) \ar@<1ex>[rrr]^-{D \LTensor_R -}
    & & & \sB(R). \ar@<1ex>[lll]^-{\RHom_R(D,-)}
           }
\]
The categories $\sA(R)$ and $\sB(R)$ are known as the Auslander and
Bass categories of $R$.  The precise definition is given in Remark
\ref{rmk:AB} below, but note that $\sA(R)$ and $\sB(R)$ contain the
bounded complexes of projective, respectively injective, modules.

This paper introduces the symmetric Auslander category $\sA^{\s}(R)$
and the symmetric Bass category $\sB^{\s}(R)$ which contain $\sA(R)$,
respectively $\sB(R)$, as full subcategories.  While $\sA(R)$ enjoys a
strong relation to Gorenstein projective modules, our main result is
that $\sA^{\s}(R)$ has a similarly close relation to {\em
homomorphisms} of Gorenstein projective modules.

This result is set in the wider context of a theory which shows that
the two new categories inhabit a universe with strong symmetry
properties.

\medskip
\noindent
{\em Background on Auslander and Bass categories. }
Recall that the Auslander category $\sA(R)$ can be characterized in
terms of totally acyclic complexes of projective modules.  Such a
complex $P$ consists of projective modules, is exact, and has the
property that $\Hom_R(P,Q)$ is exact for each projective module $Q$.
It was proved in \cite[sec.\ 4]{CFH} that a complex is in $\sA(R)$ if
and only if its homology is bounded and the left-tail of its
projective resolution is equal to the left-tail of a totally
acyclic complex of projective modules (all differentials point to the
right).

The left-tails of totally acyclic complexes of projective modules are
precisely the projective resolutions of so-called Gorenstein
projective mo\-du\-les; this is immediate from the definition of a
Gorenstein projective module as a cycle module of a totally acyclic
complex of projectives, see \cite{EJ}.  This leads to the expectation
that if we remove from $\sA(R)$ a suitable ``finite'' part, leaving
only the tails of projective resolutions, then we should get a
category of Gorenstein projective modules.

Indeed, the homotopy category $\sK^{\b}(\Prj\,R)$ of bounded complexes
of projective modules can be viewed as a subcategory of $\sA(R)$, and
we can remove it by forming the Verdier quotient $\sA(R) /
\sK^{\b}(\Prj\,R)$.  On the other hand, the Gorenstein projective
modules form a Frobenius category $\GProj(R)$, and there is a stable
category $\underline{\GProj}(R)$ obtained by dividing out
homomorphisms which factor through projective modules.  It is not
hard to show that there is an equivalence of triangulated categories
\begin{equation}
\label{equ:A}
  \underline{\GProj}(R)
  \stackrel{\simeq}{\rightarrow} \sA(R) / \sK^{\b}(\Prj\,R).
\end{equation}

\medskip
\noindent
{\em Symmetric Auslander and Bass categories. }
The main result of this paper is a higher analogue of the above
phenomenon.  Let $\sK(\Prj\,R)$ be the homotopy category of complexes
of projective modules.  We define the symmetric Auslander category
$\sA^{\s}(R)$ to be the full subcategory of $\sK(\Prj\,R)$ consisting
of complexes whose left- and right-tails are equal to the left-
and right-tails of totally acyclic complexes of projective modules.

Our main result is the following.

\smallskip
\noindent
{\bf Theorem A. }
{\em
There is an equivalence of triangulated categories
\[
  \underline{\GMor}(R)
  \stackrel{\simeq}{\rightarrow} \sA^{\s}(R) / \sK^{\b}(\Prj\,R).
\]
}

Here $\underline{\GMor}(R)$ is the stable category of Gorenstein
projective objects in $\Morph(R)$, the abelian category of
homomorphisms of $R$-modules.  Note that there is an equivalence of
categories between $\Morph(R)$ and $\Mod\, T_2(R)^{\opp}$, the category
of right-modules over the upper triangular matrix ring $T_2(R)$; cf.\
\cite{Aus}.  This implies that $\underline{\GMor}(R)$ is equivalent to
the stable category of Gorenstein projective right-modules over
$T_2(R)$.

On the other hand, we will show that the objects in
$\underline{\GMor}(R)$ are precisely the injective homomorphisms
between Gorenstein projective $R$-modules which have Gorenstein
projective cokernels.  Hence, whereas the Auslander category $\sA(R)$
is related to Gorenstein projective mo\-du\-les via equation
\eqref{equ:A}, the symmetric Auslander category $\sA^{\s}(R)$ is
si\-mi\-lar\-ly related to {\em homomorphisms} of Gorenstein
projective modules via Theorem A.

To prove the theorem, we develop a theory for the symmetric Auslander
and Bass categories.  One of the highlights is that $\sA^{\s}(R)$ is,
indeed, a highly sym\-me\-tric object.  Namely, the quotient
$\sA^{\s}(R) / \sK^{\b}(\Prj\,R)$ permits a so-called triangle of
recollements $(\sU,\sV,\sW)$ as introduced in \cite{IKM}.  This means
that $\sU$, $\sV$, $\sW$ are full subcategories of $\sA^{\s}(R) /
\sK^{\b}(\Prj\,R)$, and that each of
\[
  (\sU \, , \, \sV) \; , \; (\sV \, , \, \sW) \; , \; (\sW \, , \, \sU)
\]
is a stable t-structure.  It is not obvious, even in principle, that
such a configuration is possible, but we show that
\begin{align}
\label{equ:UVW}
\nonumber
  \sU & = \sA(R) / \sK^{\b}(\Prj\,R), \\
  \sV & = \sK_{\tac}(\Prj\,R), \\
\nonumber
  \sW & = S(\sB(R)) / \sK^{\b}(\Prj\,R)
\end{align}
work, where $\sK_{\tac}(\Prj\,R)$ is the full subcategory of
$\sK(\Prj\,R)$ consisting of totally acyclic complexes and $S$ is a
certain functor introduced in \cite[sec.\ 4]{IK}.

There are also several other results, among them the following.

\smallskip
\noindent
{\bf Theorem B. }
{\em
There are quasi-inverse equivalences of triangulated
ca\-te\-go\-ri\-es 
\[
  \xymatrix{
    \sA^{\s}(R) \ar@<1ex>[r]
    & \sB^{\s}(R). \ar@<1ex>[l]
           }
\]
}

Let $\sK^{(\b)}(\Prj\,R)$ denote the full subcateogry of
$\sK(\Prj\,R)$ consisting of complexes with bounded homology.

\smallskip
\noindent
{\bf Theorem C. }
{\em
There are inclusions
\[
  \sA(R) \subseteq \sA^{\s}(R) \subseteq \sK^{(\b)}(\Prj\,R).
\]
The first inclusion is an equality if and only if each Gorenstein
projective $R$-module is projective.

The second inclusion is an equality if and only if $R$ is a Gorenstein
ring.
}

Thus, the property that $\sA^{\s}(R)$ is minimal, respectively
maximal, cha\-rac\-te\-ri\-ses two interesting classes of rings.

Let us remark on two important sources of ideas for this paper.
First, \cite{IKM} originated the notion of a triangle of recollements
and used it to get a version of Theorem A for finitely generated
modules when $R$ is a Gorenstein ring.  The present paper can be
viewed as extending these ideas.  Secondly, while it is not obvious
from the description above, we make extensive use of the machinery
developed in \cite{IK} for homotopy categories of complexes of
projective, respectively, injective modules and their relation to
Auslander and Bass categories.

The paper is organised as follows: Section \ref{sec:background}
briefly sketches the definitions and results we will use; most of them
come from \cite{IK}.  Section \ref{sec:AsBs} proves Theorems B and C
above (Theorems \ref{thm:I} and \ref{thm:II}) and establishes the
existence of the triangle of recollements described by equation
\eqref{equ:UVW} (Theorem \ref{thm:A_Ktac_SB2}).  Section
\ref{sec:Morph} studies the category of homomorphisms $\Morph(R)$ and
its Gorenstein projective objects, and culminates in the proof of
Theorem A (Theorem \ref{thm:main}).

\bigskip
\noindent
{\em Acknowledgement. }
We thank Srikanth Iyengar for comments on a preliminary version which
led to Remark \ref{rmk:Sri}.

The first author wishes to express his sincere gratitude for the
hospitality of the second author and the Visitor Programme operated by
the Graduate School of Science at Osaka Prefecture University.  The
second author was partially supported by JSPS grant 18540044.

\section{Background}
\label{sec:background}

This section recalls the tools we will use; most of them come from
\cite{IK}. 

\begin{Setup}
\label{set:blanket}
Throughout, $R$ is a commutative noetherian ring with a dualizing
complex $D$ which is assumed to be a bounded complex of injective
modules.
\end{Setup}

Dualizing complexes were introduced in \cite{H}, but see e.g.\
\cite[sec.\ 1]{CFH} for a contemporary introduction.

\begin{Remark}
There are homotopy categories $\sK(\Prj\,R)$ and $\sK(\Inj\,R)$ of
complexes of projective, respectively, injective modules.  They have
several important triangulated subcategories:

The subcategories of bounded complexes are denoted by
$\sK^{\b}(\Prj\,R)$ and $\sK^{\b}(\Inj\,R)$.  The subcategories of
complexes with bounded homology are denoted by $\sK^{(\b)}(\Prj\,R)$
and $\sK^{(\b)}(\Inj\,R)$.

The subcategories of K-projective, respectively, K-injective complexes
are denoted by $\sK_{\prj}(R)$ and $\sK_{\inj}(R)$; see \cite{S}.

The subcategories of totally acyclic complexes are denoted
$\sK_{\tac}(\Prj\,R)$ and $\sK_{\tac}(\Inj\,R)$.  Complexes $X$ in
$\sK(\Prj\,R)$ and $Y$ in $\sK(\Inj\,R)$ are called totally acyclic if
they are exact and $\Hom_R(X,P)$ and $\Hom_R(I,Y)$ are exact for each
projective module $P$ and each injective module $I$.
\end{Remark}

\begin{Remark}
\label{rmk:inc}
Consider the subcategories $\sK_{\prj}(R) \subseteq \sK(\Prj\,R)$ and
$\sK_{\inj}(R) \subseteq \sK(\Inj\,R)$.  By \cite[sec.\ 7]{IK}, the
inclusion functors, which we will denote by $\inc$, are parts of
adjoint pairs of functors,
\[
  \xymatrix{
    \sK_{\prj}(R) \ar@<1ex>[r]^{\inc}
    & \sK(\Prj\,R) \ar@<1ex>[l]^{p}
           }
\;\;\mbox{and}\;\;
  \xymatrix{
    \sK_{\inj}(R) \ar@<-1ex>[r]_{\inc}
    & \sK(\Inj\,R). \ar@<-1ex>[l]_{i}
           }
\]
In the terminology of \cite[chp.\ 9]{Neemanbook}, the existence of the
right adjoint $p$ places us in a situation of Bousfield localization,
and accordingly, the counit morphism of the adjoint pair $(\inc,p)$
can be completed to a distinguished triangle
\[
  pX
  \stackrel{\epsilon_P}{\longrightarrow} X
  \longrightarrow aX
  \longrightarrow
\]
which depends functorially on $X$.  Both $p$ and $a$ are triangulated
functors.  Dually, the unit morphism of the adjoint pair $(i,\inc)$
can be completed to a distinguished triangle
\[
  bY
  \longrightarrow Y
  \stackrel{\eta_Y}{\longrightarrow} iY
  \longrightarrow
\]
which depends functorially on $Y$.
\end{Remark}

\begin{Remark}
\label{rmk:ST}
By \cite[thm.\ 4.2]{IK} there are quasi-inverse equivalences of
categories
\[
  \xymatrix{
    \sK(\Prj\,R) \ar@<1ex>[r]^-{T}
    & \sK(\Inj\,R) \ar@<1ex>[l]^-{S}
           }
\]
where $T(-) = D \otimes_R -$ and $S = q \circ \Hom_R(D,-)$.  The
functor $q$ is right-adjoint to the inclusion $\sK(\Prj\,R)
\rightarrow \sK(\Flat\,R)$ where $\sK(\Flat\,R)$ is the homotopy
category of complexes of flat modules.
\end{Remark}

\begin{Remark}
\label{rmk:AB}
Let us recall the following from \cite{AF}.  The derived category
$\sD(R)$ supports an adjoint pair of functors
\[
  \xymatrix{
    \sD(R) \ar@<1ex>[rrr]^-{D \LTensor_R -}
    & & & \sD(R). \ar@<1ex>[lll]^-{\RHom_R(D,-)}
           }
\]
The Auslander category of $R$ is the triangulated subcategory defined
in terms of the unit $\eta$ by
\[
  \sA(R)
  = \Biggl\{ X \in \sD(R)
    \Bigg|
      \begin{array}{l}
        \mbox{\small{$X$ and $D \LTensor_R X$ have bounded homology;}} \\
        \mbox{\small{$X \stackrel{\eta_X}{\longrightarrow} \RHom_R(D,D \LTensor_R X)$ is an isomorphism}} \\
      \end{array}
    \Biggr\}
\]
and the Bass category of $R$ is the triangulated subcategory defined
in terms of the counit $\epsilon$ by
\[
  \sB(R)
  = \Biggl\{ Y \in \sD(R)
    \Bigg|
      \begin{array}{l}
        \mbox{\small{$Y$ and $\RHom_R(D,Y)$ have bounded homology;}} \\
        \mbox{\small{$D \LTensor_R \RHom_R(D,Y) \stackrel{\epsilon_Y}{\longrightarrow} Y$ is an isomorphism}} \\
      \end{array}
    \Biggr\}.
\]
The functors $D \LTensor_R -$ and $\RHom_R(D,-)$ restrict to
quasi-inverse equivalences between $\sA(R)$ and $\sB(R)$.

The canonical functors $\sK_{\prj}(R) \rightarrow \D(R)$ and
$\sK_{\inj}(R) \rightarrow \D(R)$ are e\-qui\-va\-len\-ces, and this
permits us to view $\sA(R)$ as a full subcategory of $\sK_{\prj}(R)$
and hence of $\sK(\Prj\,R)$, and $\sB(R)$ as a full subcategory of
$\sK_{\inj}(R)$ and hence of $\sK(\Inj\,R)$.  As such, the adjoint
functors
\[
  \xymatrix{
    \sK_{\prj}(R) \ar@<1ex>[r]^-{iT}
    & \sK_{\inj}(R) \ar@<1ex>[l]^-{pS}
           }
\]
restrict to a pair of quasi-inverse equivalences between $\sA(R)$ and
$\sB(R)$ by \cite[prop.\ 7.2]{IK}.
\end{Remark}

See \cite[sec.\ 1]{CFH} for an alternative review of Auslander and
Bass categories.

\begin{Definition}
Let $\sT$ be a triangulated category.  A stable t-structure on $\sT$
is a pair of full subcategories $(\sU,\sV)$ such that
\begin{enumerate}

  \item  $\Sigma\sU = \sU$, $\Sigma\sV = \sV$.

\smallskip

  \item  $\Hom_{\sT}(\sU,\sV) = 0$.

\smallskip

  \item  For each $T$ in $\sT$ there exist $U$ in $\sU$ and $V$ in
$\sV$ and a distinguished triangle $U \rightarrow T \rightarrow V
\rightarrow$. 

\end{enumerate}
A triangle of recollements in $\sT$ is a triple $(\sU,\sV,\sW)$ such
that each of $(\sU,\sV)$, $(\sV,\sW)$, $(\sW,\sU)$ is a stable
t-structure.

Let $\sT'$ be another triangulated category with a triangle of
recollements $(\sU',\sV',\sW')$ and let $F : \sT \rightarrow \sT'$ be
a triangulated functor.  We say that $F$ sends $(\sU,\sV,\sW)$ to
$(\sU',\sV',\sW')$ if $F(\sU) \subseteq \sU'$, $F(\sV) \subseteq
\sV'$, $F(\sW) \subseteq \sW'$.
\end{Definition}

\section{Symmetric Auslander and Bass categories}
\label{sec:AsBs}

This section develops a theory of symmetric Auslander and Bass
ca\-te\-go\-ri\-es.  It proves Theorems B and C from the Introduction,
and establishes the existence of the triangle of recollements
described by equation \eqref{equ:UVW} (Theorems \ref{thm:I},
\ref{thm:II}, and \ref{thm:A_Ktac_SB2}).

For the rest of the paper, an unadorned $\sK$ stands for
$\sK(\Prj\,R)$.  We combine this in an obvious way with various
embellishments to form $\sK^{\b}$, $\sK^{(\b)}$, $\sK_{\prj}$, and
$\sK_{\tac}$.  Likewise, unadorned categories such as $\sA$, $\sB$,
and $\sD$ stand for $\sA(R)$, $\sB(R)$, and $\sD(R)$.

In the following definition, $\sX * \sY$ denotes the full subcategory
of objects $C$ which sit in distinguished triangles $X \rightarrow C
\rightarrow Y \rightarrow$ with $X$ in $\sX$ and $Y$ in $\sY$.

\begin{Definition}
The {\em symmetric Auslander category} $\sA^{\s}$ and the {\em
symmetric Bass category} $\sB^{\s}$ of $R$ are the full subcategories
of $\sK(\Prj\,R)$ and $\sK(\Inj\,R)$ defined by
\[
  \sA^{\s} = S(\sB) * \sA
  \;\; \mbox{and} \;\;
  \sB^{\s} = \sB * T(\sA)
\]
where $S$ and $T$ are the functors from \cite{IK} described in Remark
\ref{rmk:ST}.
\end{Definition}

\begin{Remark}
By \cite[thm.\ 4.1]{CFH}, the subcategory $\sA$ of $\sK$
consists of complexes isomorphic to right-bounded complexes of
projective mo\-du\-les whose left-tail is equal to the left-tail of a
complete projective resolution. 

Using the theory of \cite{IK}, one can show that similarly, $S(\sB)$
consists of complexes isomorphic to left-bounded complexes of
projective mo\-du\-les whose right-tail is equal to the right-tail of
a complete projective resolution.

From this it follows that $\sA^{\s}$ consists of complexes isomorphic
to complexes of projective modules both of whose tails are equal to
the tails of complete projective resolutions.

Similar remarks apply to $\sB^{\s}$, and this is one of the reasons
for the terminology ``symmetric Auslander and Bass categories''.
\end{Remark}

\begin{Remark}
The following lemma and most of the other results in this section will
only be given for $\sA^{\s}$, but there are dual versions for
$\sB^{\s}$ with similar proofs.
\end{Remark}

\begin{Lemma}
\label{lem:Kiriko}
Let $C$ be in $\sK$.  Then $C$ is in $\sA^{\s}$ if and only
if the following conditions are satisfied.
\begin{enumerate}

  \item  $C$ and $TC$ have bounded homology.

\smallskip

  \item  The mapping cone of $pC
\stackrel{\epsilon_C}{\longrightarrow} C$ is totally acyclic.

\smallskip

  \item  The mapping cone of $TC \stackrel{\eta_{TC}}{\longrightarrow}
iTC$ is totally acyclic.

\end{enumerate}
\end{Lemma}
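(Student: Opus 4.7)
The plan is to prove the two implications separately. For both, the picture to keep in mind is the ``two-tail'' description of $\sA^{\s}$ stated (without proof) in the Remark following its definition.

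($\Rightarrow$): The class of $C\in\sK$ satisfying (i)--(iii) is a triangulated subcategory of $\sK$. Indeed, $p$, $a$, $T$, $i$, $b$ are all triangulated functors (Remarks~\ref{rmk:inc} and~\ref{rmk:ST}); bounded-homology complexes form triangulated subcategories of $\sK$ and $\sK(\Inj\,R)$; and totally acyclic complexes likewise form triangulated subcategories. Since $\sA^{\s}=S(\sB)*\sA$, it is then enough to check (i)--(iii) on $\sA$ and on $S(\sB)$ separately. For $Y\in\sA$: K-projectivity of $Y$ gives $aY=0$, which is (ii); condition (i) is built into the definition of $\sA$ (Remark~\ref{rmk:AB}); and (iii) follows from the equivalence $iT:\sA\stackrel{\sim}{\to}\sB$, since $iTY$ is then a Bass-category representative of $TY$, and the fibre $bTY$ of $TY\to iTY$ is totally acyclic by the arguments of \cite[sec.\ 7]{IK}. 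For $X\in S(\sB)$: dually, $TX\in\sB\subseteq\sK_{\inj}(R)$ gives $bTX=0$ (hence (iii)), (i) is built in, and (ii) follows from the dual equivalence $pS:\sB\stackrel{\sim}{\to}\sA$ together with the corresponding IK-type identification of $pX$.

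($\Leftarrow$): Given $C$ satisfying (i)--(iii), the plan is to realise $C$ as the middle term of a brutal-truncation triangle $\sigma_{>n}C\to C\to\sigma_{\leq n}C$ in $\sK$. By the Remark after the definition of $\sA^{\s}$, the required memberships $\sigma_{>n}C\in S(\sB)$ and $\sigma_{\leq n}C\in\sA$ amount to showing that $C$ is isomorphic in $\sK$ to a complex of projectives whose left-tail agrees with the left-tail of a totally acyclic complex and whose right-tail agrees with the right-tail of a totally acyclic complex. For the left-tail, one works with the triangle $pC\to C\to aC$, using that $aC$ is totally acyclic by (ii) and that $pC$ can be chosen as a sufficiently concrete K-projective representative; for the right-tail, one similarly uses $bTC\to TC\to iTC$, applies $S$ and invokes $ST=\id_{\sK(\Prj\,R)}$ (Remark~\ref{rmk:ST}) to obtain an analogous triangle $SbTC\to C\to SiTC$ in $\sK$ with $SbTC$ totally acyclic (via the IK equivalence $\sK_{\tac}(\Inj\,R)\simeq\sK_{\tac}(\Prj\,R)$). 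Choosing $n$ larger than the bound on $\H^*(C)$ and beyond the range where the two tails are explicitly totally acyclic, the short exact sequence of complexes $0\to\sigma_{>n}C\to C\to\sigma_{\leq n}C\to 0$ yields the desired distinguished triangle in $\sK$: the subcomplex is left-bounded with right-tail of a totally acyclic complex (hence in $S(\sB)$), and the quotient is right-bounded with left-tail of a totally acyclic complex (hence in $\sA$), exhibiting $C$ in $S(\sB)*\sA=\sA^{\s}$.

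The main obstacle is the tail-identification step inside $(\Leftarrow)$: one must show that abstract total acyclicity of the Bousfield-type cones $aC$ and $bTC$ forces $C$ to be isomorphic in $\sK$ to an explicit complex of projectives whose tails come from honest totally acyclic complexes. This cannot be read off purely from the adjoint-functor formalism; it requires the more detailed machinery of \cite[sec.\ 7]{IK} for producing K-projective (resp.\ K-injective) reflections as concrete complexes whose tails can be spliced with the totally acyclic cones to yield the isomorphic representative of $C$ that the brutal truncation then decomposes.
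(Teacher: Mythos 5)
Your forward direction is essentially the paper's argument, reformulated: you observe that the class of $C$ satisfying (i)--(iii) is triangulated and then check membership on $\sA$ and $S(\sB)$ separately, invoking the Iyengar--Krause results to identify $aSB$ and $bTA$ as totally acyclic. The paper phrases this through a $3\times 3$ diagram coming from the triangle $SB\to C\to A\to$ and then uses \cite[prop.\ 7.4]{IK} in exactly the same place, so this half is fine.

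The converse is where the gap is, and you correctly put your finger on it yourself: your plan requires first replacing $C$ by a ``nice'' representative whose two tails literally coincide with tails of totally acyclic complexes, and you admit you cannot produce that representative from the adjoint-functor formalism alone. The paper's proof sidesteps this entirely. It hard-truncates $C$ as it stands into $C^{\geq 0}\to C\to C^{<0}\to$ and never needs a tail-matching step. The observations that make this work are purely formal: $C^{<0}$ is a bounded-above complex of projectives, hence automatically K-projective, so $\epsilon_{C^{<0}}$ is an isomorphism and $aC^{<0}=0$; dually, $B:=T(C^{\geq 0})=D\otimes_R C^{\geq 0}$ is a bounded-below complex of injectives, hence automatically K-injective, and its homology is bounded because $B$ agrees with $TC$ in high degrees (using that $D$ is bounded) and $TC$ has bounded homology by (i). One then feeds $aC$ totally acyclic (condition (ii)) through the $3\times3$ diagram to conclude $aC^{\geq 0}\cong aC$ is totally acyclic, and applies \cite[prop.\ 7.4]{IK} directly to get $B\in\sB$, hence $C^{\geq 0}=SB\in S(\sB)$; the dual argument with (iii) gives $C^{<0}\in\sA$. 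So the ``more detailed machinery'' you were reaching for is unnecessary: the truncation of the untouched $C$ already has the required K-projectivity/K-injectivity for free, and the memberships $C^{\geq 0}\in S(\sB)$, $C^{<0}\in\sA$ are checked against the definition of $\sB$ (resp.\ $\sA$) rather than against the two-tail picture.
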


\begin{proof}
``Only if'': Suppose that $C$ is in $\sA^{\s}$.  By definition, there
is a distinguished triangle
\[
  SB \rightarrow C \rightarrow A \rightarrow
\]
in $\sK$ with $B$ in $\sB$ and $A$ in $\sA$.  All of $SB$, $A$, $TSB
\cong B$, and $TA$ have bounded homology, so the same is true for $C$
and $TC$, proving condition (i).

By Remark \ref{rmk:inc}, the distinguished triangle induces the
following commutative diagram where each row and each column is a
distinguished triangle.
\[
  \xymatrix{
    pSB \ar[r] \ar[d]_{\epsilon_{SB}} & pC \ar[r] \ar[d]_{\epsilon_C} & pA \ar[r] \ar[d]_{\epsilon_A} & {} \\
    SB \ar[r] \ar[d] & C \ar[r] \ar[d] & A \ar[r] \ar[d] & {} \\
    aSB \ar[r]_{\alpha} \ar[d] & aC \ar[r] \ar[d] & aA \ar[r] \ar[d] & {} \\
    {} & {} & {} &
           }
\]
Since $A$ is K-projective, $\epsilon_A$ is an isomorphism.  Hence $aA$
is zero so $\alpha$ is an isomorphism.  But $B$ is in $\sB$ so $aSB$
is totally acyclic by \cite[prop.\ 7.4]{IK}, and so $aC$ is totally
acyclic, proving condition (ii).  A similar argument proves condition
(iii).

``If'': Suppose that conditions (i) through (iii) hold.  Hard
truncation gives a distinguished triangle
\[
  C^{\geq 0} \rightarrow C \rightarrow C^{<0} \rightarrow
\]
in $\sK$.  We aim to show that $C^{\geq 0}$ is in $S(\sB)$ and that
$C^{<0}$ is in $\sA$ whence $C$ is in $\sA^{\s}$.

Set
\[
  B = T(C^{\geq 0}) = D \otimes_R C^{\geq 0}
\]
so $SB = ST(C^{\geq 0}) \cong C^{\geq 0}$.  Since $C^{\geq 0}$ is a
left-bounded complex of projective modules and $D$ is a bounded
complex of injective modules, $B$ is a left-bounded complex of
injective modules.  In particular, it is K-injective.

Since $D$ is bounded, the complexes $B$ and $TC = D \otimes_R C$ agree
in high cohomological degrees.  But $B$ is left-bounded and $TC$ has
bounded homology by condition (i), so it follows that $B$ has bounded
ho\-mo\-lo\-gy.  Also, $B$ is K-injective so $\RHom_R(D,B)$ can be
computed as $\Hom_R(D,B)$, but
\[
  \Hom_R(D,B)
  \stackrel{\rm (a)}{\simeq} q \circ \Hom_R(D,B)
  = SB
  \cong C^{\geq 0}
\]
where the quasi-isomorphism (a) is by \cite[thm.\ 2.7]{IK}.  Since the
homology of $C^{\geq 0}$ is bounded, so is the homology of
$\RHom_R(D,B)$. 

As above, the distinguished triangle induces the following commutative
diagram where each row and each column is a distinguished triangle.
\[
  \xymatrix{
    pC^{\geq 0} \ar[r] \ar[d]_{\epsilon_{C^{\geq 0}}} & pC \ar[r] \ar[d]_{\epsilon_C} & pC^{<0} \ar[r] \ar[d]_{\epsilon_{C^{<0}}} & {} \\
    C^{\geq 0} \ar[r] \ar[d] & C \ar[r] \ar[d] & C^{<0} \ar[r] \ar[d] & {} \\
    aC^{\geq 0} \ar[r]_{\beta} \ar[d] & aC \ar[r] \ar[d] & aC^{<0} \ar[r] \ar[d] & {} \\
    {} & {} & {} &
           }
\]
Since $C^{<0}$ is a right-bounded complex of projective modules it is
K-projective and so $\epsilon_{C^{<0}}$ is an isomorphism.  Hence
$aC^{<0}$ is zero so $\beta$ is an isomorphism.  But $aC$ is totally
acyclic by condition (ii), and so $aC^{\geq 0}$ is totally acyclic.
Since $SB \cong C^{\geq 0}$, it follows from \cite[prop.\ 7.4]{IK} that
$B$ is in $\sB$ and so $C^{\geq 0}$ is in $S(\sB)$.

A similar argument proves that $C^{<0}$ is in $\sA$.
\end{proof}

\begin{Proposition}
\label{pro:subcats}
The category $\sA^{\s}$ is a triangulated subcategory of $\sK$,
and there are inclusions of triangulated subcategories
\[
  \sK_{\tac} \subseteq \sA^{\s} \subseteq \sK^{(\b)}.
\]
\end{Proposition}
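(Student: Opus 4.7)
The plan is to use the cone-based reformulation in Lemma \ref{lem:Kiriko} to prove that $\sA^{\s}$ is a triangulated subcategory, and to use the tail-based description of $\sA$ and $S(\sB)$ recalled after the definition of $\sA^{\s}$ (itself derived from \cite[thm.\ 4.1]{CFH} and \cite{IK}) for the inclusion $\sK_{\tac}\subseteq\sA^{\s}$. To show that $\sA^{\s}$ is closed under $\Sigma^{\pm 1}$ and under extensions I verify the three conditions of Lemma \ref{lem:Kiriko} in turn. Stability under suspension is immediate: $p$, $T$ and $i$ are triangulated functors, and both ``bounded homology'' and ``totally acyclic'' are $\Sigma$-stable classes. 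For an extension $C'\to C\to C''\to$ with $C',C''\in\sA^{\s}$, condition (i) passes to $C$ because $\sK^{(\b)}$ is a triangulated subcategory of $\sK$. For (ii), naturality of the counit $\epsilon$ (as used in the $3\times 3$ diagrams appearing in the proof of Lemma \ref{lem:Kiriko}) produces a distinguished triangle $aC'\to aC\to aC''\to$; since the outer terms are totally acyclic and $\sK_{\tac}$ is a triangulated subcategory of $\sK$, so is $aC$. The dual argument applied to the unit $\eta$ handles condition (iii).

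The inclusion $\sA^{\s}\subseteq\sK^{(\b)}$ is immediate from condition (i) of Lemma \ref{lem:Kiriko}. For $\sK_{\tac}\subseteq\sA^{\s}$, I would combine hard truncation with the tail-based description: an object of $\sA$ is, up to isomorphism in $\sK$, a right-bounded complex of projectives whose left-tail coincides with that of a totally acyclic complex of projectives, and dually an object of $S(\sB)$ is a left-bounded complex of projectives with a matching right-tail. Given $C\in\sK_{\tac}$, the hard truncation triangle $C^{\geq 0}\to C\to C^{<0}\to$ in $\sK$ has $C^{<0}$ right-bounded with left-tail equal to the left-tail of the totally acyclic complex $C$, so $C^{<0}\in\sA$; dually $C^{\geq 0}\in S(\sB)$. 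Therefore $C\in S(\sB)*\sA=\sA^{\s}$.

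The subtlest point is closure of $\sA^{\s}$ under extensions, since in general $\sX*\sY$ is not a triangulated subcategory. The way through is to translate ``both tails equal those of a totally acyclic complex'' into the triangle-stable conditions of Lemma \ref{lem:Kiriko}, where the cones $aC$ and (the cone of) $\eta_{TC}$ encode the tail information. The inclusion $\sK_{\tac}\subseteq\sA^{\s}$ similarly leans on the tail-based description of $\sA$ and $S(\sB)$, which is itself a non-trivial consequence of \cite{CFH} and \cite{IK}.
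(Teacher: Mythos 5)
Your proof is correct, and the parts showing that $\sA^{\s}$ is a triangulated subcategory and that $\sA^{\s}\subseteq\sK^{(\b)}$ coincide with the paper's: both reduce to the observation that conditions (i)--(iii) of Lemma \ref{lem:Kiriko} are preserved under suspension and mapping cones, using that $T$, $a$, and the cone of $\eta$ are triangulated functors.

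For the inclusion $\sK_{\tac}\subseteq\sA^{\s}$ you take a genuinely different route. You build an explicit $S(\sB)*\sA$ decomposition: hard-truncate a totally acyclic $C$ at degree $0$ and invoke the tail characterizations of $\sA$ (from \cite[thm.\ 4.1]{CFH}) and of $S(\sB)$ to place $C^{<0}$ in $\sA$ and $C^{\geq 0}$ in $S(\sB)$. The paper instead verifies Lemma \ref{lem:Kiriko}(i)--(iii) directly for $C\in\sK_{\tac}$: total acyclicity gives vanishing homology for $C$, while $TC$ is totally acyclic by \cite[prop.\ 5.9(1)]{IK}, giving (i); and since $C$ and $TC$ are acyclic one has $pC=0$ and $iTC=0$, so the mapping cones in (ii) and (iii) are $C$ and $\Sigma TC$, both totally acyclic. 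Your version is more constructive and makes visible where each half of the truncation lands; the paper's version is slightly more self-contained, since it rests on the precise citation \cite[prop.\ 5.9(1)]{IK} rather than on the tail description of $S(\sB)$, which the paper states only in outline. Both arguments are valid.
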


\begin{proof}
It is well known that $\sK_{\tac}$ and $\sK^{(\b)}$ are triangulated
subcategories of $\sK$.

Conditions (i) through (iii) of Lemma \ref{lem:Kiriko} respect mapping
cones, so $\sA^{\s}$ is a triangulated subcategory of $\sK$.

The second inclusion of the proposition is immediate from Lemma
\ref{lem:Kiriko}(i), and the first one follows from Lemma
\ref{lem:Kiriko}(i)--(iii) combined with the fact that $T$ sends
totally acyclic complexes to totally acyclic complexes by \cite[prop.\
5.9(1)]{IK}.
\end{proof}

\begin{Remark}
\label{rmk:Sri}
We owe the following observations based on Lemma \ref{lem:Kiriko} to
Srikanth Iyengar.

The Auslander and Bass categories $\sA$ and $\sB$ also exist in
versions $\widehat{\sA}$ and $\widehat{\sB}$ without boundedness
conditions \cite[7.1]{IK}.  With small modifications, the proof of
Lemma \ref{lem:Kiriko} shows that membership of
$S(\widehat{\sB})*\widehat{\sA}$ is characterised by conditions (ii)
and (iii) of the Lemma.

It is immediate from Lemma \ref{lem:Kiriko} that $\sA * S(\sB)$ is
contained in $\sA^{\s} = S(\sB) * \sA$.  This is a bit surprising
since one would not normally expect any inclusion between categories
of the form $\sX * \sY$ and $\sY * \sX$.

We do not know if $\sA * S(\sB)$ is triangulated, but it will often be
considerably smaller than $S(\sB) * \sA$ since $\sK_{\tac}$ is
contained in $S(\sB) * \sA$ by Proposition \ref{pro:subcats} while it
is easy to show that the intersection of $\sA * S(\sB)$ with
$\sK_{\tac}$ is zero.
\end{Remark}

\begin{Theorem}
\label{thm:I}
The functors $T$ and $S$ restrict to quasi-inverse
e\-qui\-va\-len\-ces of triangulated categories
\[
  \xymatrix{
    \sA^{\s} \ar@<1ex>[r]^-{T}
    & \sB^{\s}. \ar@<1ex>[l]^-{S}
           }
\]
\end{Theorem}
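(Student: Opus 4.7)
The plan is to leverage two facts: by Remark \ref{rmk:ST} the functors $T$ and $S$ are already quasi-inverse equivalences on all of $\sK(\Prj\,R)\leftrightarrow\sK(\Inj\,R)$, and by construction both $\sA^{\s}$ and $\sB^{\s}$ are defined as $*$-gluings of the summands $\sA$, $\sB$, $S(\sB)$, $T(\sA)$ which are swapped by $T$ and $S$ up to the natural isomorphisms $TS\simeq\id$ and $ST\simeq\id$. So really there is nothing to verify beyond that each functor lands in the correct subcategory; the quasi-inverse property is then inherited for free.

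First I would show $T(\sA^{\s})\subseteq\sB^{\s}$. Take $C\in\sA^{\s}=S(\sB)*\sA$ and pick a distinguished triangle $SB\to C\to A\to$ in $\sK$ with $B\in\sB$ and $A\in\sA$. Applying the triangulated functor $T$ yields a distinguished triangle
\[
TSB\longrightarrow TC\longrightarrow TA\longrightarrow
\]
in $\sK(\Inj\,R)$. By Remark \ref{rmk:ST} the canonical map gives $TSB\cong B\in\sB$, and visibly $TA\in T(\sA)$, so $TC\in\sB*T(\sA)=\sB^{\s}$. Dually, given $E\in\sB^{\s}=\sB*T(\sA)$ with triangle $B\to E\to TA\to$, applying $S$ gives $SB\to SE\to STA\to$; since $STA\cong A\in\sA$ and $SB\in S(\sB)$, this shows $SE\in S(\sB)*\sA=\sA^{\s}$, hence $S(\sB^{\s})\subseteq\sA^{\s}$.

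Now the restrictions $T\colon\sA^{\s}\to\sB^{\s}$ and $S\colon\sB^{\s}\to\sA^{\s}$ are well-defined triangulated functors, and the natural isomorphisms $\id\stackrel{\simeq}{\to}ST$ on $\sK(\Prj\,R)$ and $TS\stackrel{\simeq}{\to}\id$ on $\sK(\Inj\,R)$ supplied by Remark \ref{rmk:ST} restrict to natural isomorphisms between the composites and the identities on $\sA^{\s}$ and $\sB^{\s}$. This is exactly the statement that the restricted pair is a pair of quasi-inverse equivalences.

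There is no real obstacle here: the entire argument is formal once the definitions are unpacked. The only mild subtlety is remembering that the $*$-constructions are full subcategories of $\sK(\Prj\,R)$ and $\sK(\Inj\,R)$ respectively, so one must confirm that the triangles produced by applying $T$ or $S$ live in the correct ambient category—which they do, since $T$ and $S$ are defined exactly between these two ambient homotopy categories.
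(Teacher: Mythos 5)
Your argument is correct and is exactly what the paper means by its terse remark that the result ``is immediate from the definition of $\sA^{\s}$ and $\sB^{\s}$ because $T$ and $S$ are quasi-inverse equivalences''; you have simply written out the details that the paper leaves implicit.
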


\begin{proof}
This is immediate from the definition of $\sA^{\s}$ and $\sB^{\s}$
because $T$ and $S$ are quasi-inverse equivalences of triangulated
categories.
\end{proof}

\begin{Theorem}
\label{thm:A_Ktac_SB}
\begin{enumerate}

  \item  The category $\sA^{\s}$ has stable t-structures
\[
  (\sA,\sK_{\tac}(\Prj\,R))
  \;\; \mbox{and} \;\;
  (\sK_{\tac}(\Prj\,R),S(\sB)).
\]

\smallskip

  \item  The category $\sB^{\s}$ has stable t-structures
\[
  (\sK_{\tac}(\Inj\,R),\sB)
  \;\; \mbox{and} \;\;
  (T(\sA),\sK_{\tac}(\Inj\,R)).
\]

\end{enumerate}
\end{Theorem}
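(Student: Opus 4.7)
The plan is to verify the three stable-t-structure axioms for each of the two pairs in part (i), with part (ii) following by the symmetric argument in $\sB^{\s}$.

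Shift-invariance is automatic because $\sA$, $\sK_{\tac}(\Prj\,R)$, and $S(\sB)$ are all triangulated subcategories of $\sK$ ($S$ is a triangulated equivalence, so it sends the triangulated $\sB$ to a triangulated subcategory). The Hom-vanishing in $(\sA,\sK_{\tac}(\Prj\,R))$ is immediate: any $A\in\sA\subseteq\sK_{\prj}$ is K-projective and any $X\in\sK_{\tac}(\Prj\,R)$ is exact, so via the equivalence $\sK_{\prj}\simeq\sD$ we have $\Hom_{\sK}(A,X)=\Hom_{\sD}(A,X)=0$. For $(\sK_{\tac}(\Prj\,R),S(\sB))$ I would apply the equivalence $T$ and use that $T$ preserves total acyclicity by \cite[prop.\ 5.9(1)]{IK}, together with $T\circ S\cong\id$ and $\sB\subseteq\sK_{\inj}$ from Remark \ref{rmk:AB}, reducing to the fact that any morphism from an exact complex to a K-injective complex is null-homotopic.

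For the decomposition triangles I would use the Bousfield triangles of Remark \ref{rmk:inc}. Given $C\in\sA^{\s}$, the triangle
\[
pC\longrightarrow C\longrightarrow aC\longrightarrow
\]
has $aC\in\sK_{\tac}(\Prj\,R)$ directly by Lemma \ref{lem:Kiriko}(ii). To see $pC\in\sA$, note that $pC\in\sK_{\prj}$ by construction; its homology equals that of $C$ (as $aC$ is exact), hence is bounded by Lemma \ref{lem:Kiriko}(i); and applying $T$ to the triangle, $T(aC)$ is totally acyclic by \cite[prop.\ 5.9(1)]{IK}, hence exact, so $T(pC)$ has the same bounded homology as $TC$. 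Invoking the intrinsic characterization of the Auslander category (e.g.\ \cite[prop.\ 7.2]{IK}), these properties place $pC$ in $\sA$. The other decomposition is obtained by applying $S$ to the dual Bousfield triangle $bTC\to TC\to iTC\to$ and using Theorem \ref{thm:I} to identify $S(TC)\cong C$, producing
\[
S(bTC)\longrightarrow C\longrightarrow S(iTC)\longrightarrow;
\]
Lemma \ref{lem:Kiriko}(iii) forces $bTC$ totally acyclic, so $S(bTC)\in\sK_{\tac}(\Prj\,R)$ by the dual of \cite[prop.\ 5.9(1)]{IK}, and the dual argument places $iTC\in\sB$, hence $S(iTC)\in S(\sB)$. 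Part (ii) is the entirely dual statement in $\sB^{\s}$, swapping the roles of $\Prj\,R$ and $\Inj\,R$ and of $T$ and $S$.

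The main obstacle I anticipate is confirming $pC\in\sA$ (and dually $iTC\in\sB$) from only K-projectivity and bounded homology: the definition in Remark \ref{rmk:AB} additionally requires the unit isomorphism $X\to\RHom_R(D,TX)$, which is not obviously automatic. If \cite[prop.\ 7.2]{IK} does not directly supply it, I would fall back on the $3\times 3$ diagram from the proof of Lemma \ref{lem:Kiriko}: its top row $pSB\to pC\to A\to$ realises $pC$ as an extension of $A\in\sA$ by $pSB$, so extension-closure of $\sA$ reduces the question to $pSB\in\sA$ for $B\in\sB$, which in turn reduces by the same kind of argument to the characterization of $\sB$ applied to $B$.
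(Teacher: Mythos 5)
Your proposal is essentially correct, and the route it takes is close to the paper's. Two remarks are in order.

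First, you are right to be suspicious of your ``main'' route to $pC \in \sA$. Knowing that $pC$ is K-projective with bounded homology and that $T(pC)$ has bounded homology does \emph{not} by itself put $pC$ in $\sA$: the definition in Remark~\ref{rmk:AB} also requires the unit $pC \to \RHom_R(D, T(pC))$ to be an isomorphism, and this is not implied by the boundedness conditions alone, nor does \cite[prop.~7.2]{IK} supply an intrinsic characterisation of that form (it is the statement that $pS$ and $iT$ are quasi-inverse between $\sA$ and $\sB$). So the gap you flag is genuine, and your fallback is actually the argument: apply $p$ functorially to the defining triangle $SB \to C \to A$ to get $pSB \to pC \to pA \cong A$, note $pSB \in \sA$ by \cite[prop.~7.2]{IK} and $A \in \sA$, and conclude $pC \in \sA$ by extension-closure. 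That is precisely the content of the fourth column of the paper's octahedral diagram; the paper phrases it as building a new object $A'$ by an octahedron starting from $aSB$ and the lift of $\Sigma^{-1}A \to SB$ through $\epsilon_{SB}$, and what comes out is (canonically isomorphic to) your $pC$.

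Second, a minor organisational difference: the paper proves $\sA^{\s} = \sA * \sK_{\tac}$ directly, proves the analogous decomposition of $\sB^{\s}$ using \cite[prop.~7.3]{IK}, and then transports each of the remaining two t-structures across the equivalence of Theorem~\ref{thm:I}. You instead prove both t-structures of part (i) directly — the first from the Bousfield triangle $pC \to C \to aC$ via Lemma~\ref{lem:Kiriko}(ii), the second from $S$ applied to $bTC \to TC \to iTC$ via Lemma~\ref{lem:Kiriko}(iii) — and then dualize for part (ii). Both are sound; your version leans a bit more heavily on Lemma~\ref{lem:Kiriko}, whereas the paper works from the defining triangle $SB \to C \to A$ and \cite[prop.~7.4]{IK}. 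Your Hom-vanishing arguments (K-projective against exact, and transport through $T$ to K-injective against exact for the other pair) match the paper's.
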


\begin{proof}
The first of the stable t-structures in part (i) can be established as
follows.

The category $\sA^{\s}$ contains $\sA$ by definition and $\sK_{\tac}$
by Proposition \ref{pro:subcats}.  Each $A$ in $\sA$ is K-projective,
so a morphism $A \rightarrow U$ with $U$ in $\sK_{\tac}$ is zero.

Existence of the first stable t-structure will thus follow if we can
prove $\sA^{\s} = \sA * \sK_{\tac}$.

For $C$ in $\sA^{\s}$, there is a distinguished triangle $SB
\longrightarrow C \longrightarrow A \longrightarrow$ with $B$ in $\sB$
and $A$ in $\sA$.  Turning the triangle gives a distinguished triangle
$\Sigma^{-1}A \stackrel{\alpha}{\longrightarrow} SB \longrightarrow C
\longrightarrow A$.

There is also a distinguished triangle $pSB
\stackrel{\epsilon_{SB}}{\longrightarrow} SB \longrightarrow U
\longrightarrow$ and $U$ is totally acyclic by \cite[prop.\ 7.4]{IK}.
Since $\Sigma^{-1}A$ is in $\sA$, each morphism $\Sigma^{-1}A
\rightarrow U$ is zero, and hence $\alpha$ lifts through
$\epsilon_{SB}$. 

By the octahedral axiom, there is hence a commutative diagram in which
each row and each column is a distinguished triangle,
\[
  \xymatrix{
    \Sigma^{-1}pSB \ar[r] \ar[d] & \Sigma^{-1}SB \ar[r] \ar[d] & \Sigma^{-1}U \ar[r] \ar[d] & pSB \ar[d] \\
    \Sigma^{-1}A^{\prime} \ar[r] \ar[d] & 0 \ar[r] \ar[d] & A^{\prime} \ar@{=}[r] \ar[d] & A^{\prime} \ar[d] \\
    \Sigma^{-1}A \ar[r]^{\alpha} \ar[d] & SB \ar[r] \ar@{=}[d] & C \ar[r] \ar[d] & A \ar[d] \\
    pSB \ar[r]_{\epsilon_{SB}} & SB \ar[r] & U \ar[r] & \Sigma pSB.
           }
\]
Since $B$ is in $\sB$, the object $pSB$ is in $\sA$ by \cite[prop.\
7.2]{IK}; see Remark \ref{rmk:AB}.  Since $A$ is also in $\sA$, it
follows that $A^{\prime}$ is in $\sA$.  So the third column of the
above diagram shows $\sA^{\s} = \sA * \sK_{\tac}$, proving existence
of the first stable t-structure in the theorem.

The first of the stable t-structures in part (ii) follows by an
analogous argument using \cite[prop.\ 7.3]{IK} instead of \cite[prop.\
7.2]{IK}.

The second stable t-structure in part (i) is obtained by applying $S$
to the first stable t-structure in part (ii).  The second stable
t-structure in part (ii) is obtained by applying $T$ to the first
stable t-structure in part (i).
\end{proof}

\begin{Theorem}
\label{thm:II}
There are inclusions
\[
  \sA \subseteq \sA^{\s} \subseteq \sK^{(\b)}.
\]
The first inclusion is an equality if and only if each Gorenstein
projective $R$-module is projective.

The second inclusion is an equality if and only if $R$ is a Gorenstein
ring.
\end{Theorem}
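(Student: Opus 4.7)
The inclusions themselves are immediate: every $A \in \sA$ sits in the distinguished triangle $0 \to A \to A \to$ with $0 \in S(\sB)$, so $\sA \subseteq \sA^{\s}$ by the very definition of $\sA^{\s}$; and $\sA^{\s} \subseteq \sK^{(\b)}$ has already been recorded as Proposition \ref{pro:subcats}. The substantive content is in the two biconditionals.

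For the first equivalence the plan is to exploit Theorem \ref{thm:A_Ktac_SB}(i), whose stable t-structure $(\sA,\sK_{\tac})$ on $\sA^{\s}$ reduces $\sA = \sA^{\s}$ to the condition $\sK_{\tac} = 0$ in $\sK$. If $\sA = \sA^{\s}$ then $\sK_{\tac} \subseteq \sA \subseteq \sK_{\prj}$, but any K-projective acyclic complex $P$ satisfies $\Hom_{\sK}(P,P)=0$ so $\id_P=0$ and $P$ is null-homotopic, giving $\sK_{\tac} = 0$. Conversely, if $\sK_{\tac}=0$ the same stable t-structure yields $\sA^{\s} = \sA * \sK_{\tac} = \sA$. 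The remaining translation, $\sK_{\tac} = 0$ if and only if every Gorenstein projective $R$-module is projective, is routine: the cycles of a contractible complex of projectives split off as projective direct summands, while conversely a totally acyclic complex with projective cycles is contractible.

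For the second equivalence I would approach both directions through Lemma \ref{lem:Kiriko}. Suppose first that $\sA^{\s} = \sK^{(\b)}$, and let $C$ be an arbitrary acyclic complex of projective modules. Then $C$ lies in $\sK^{(\b)} = \sA^{\s}$. Since $\Hom_{\sK}(K,C)=0$ for every K-projective $K$, the object $pC$ is K-projective and orthogonal to itself, hence $pC = 0$, so the triangle $pC \to C \to aC \to$ collapses to $C \cong aC$. Condition (ii) of Lemma \ref{lem:Kiriko} then forces $C$ to be totally acyclic. Conversely, if $R$ is Gorenstein, I would verify the three conditions of Lemma \ref{lem:Kiriko} for $C \in \sK^{(\b)}$: over a Gorenstein ring $D$ has finite flat dimension, so $TC = D \otimes_R C$ has bounded homology (condition (i)); and both $aC$ and the analogous cone in $\sK(\Inj\,R)$ built from $TC \to iTC$ are acyclic (by applying $\Hom_{\sK}(R[n],-)$ and its dual), hence totally acyclic, yielding (ii) and (iii).

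The only non-formal step, and hence the main obstacle, is the appeal to the characterisation of $R$ being Gorenstein as precisely the condition that every acyclic complex of projective modules is totally acyclic (and the dual statement for injectives). This is essentially contained in \cite{IK}, but some care is needed to cite it at the required level of generality — for arbitrary, not necessarily finitely generated, modules. Once this ingredient is in place, everything else in the argument is purely formal consequence of Lemma \ref{lem:Kiriko}, Proposition \ref{pro:subcats}, and Theorem \ref{thm:A_Ktac_SB}.
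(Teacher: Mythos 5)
Your proposal is correct, and it diverges from the paper's argument in one interesting place. The inclusions and the first biconditional are handled exactly as in the paper, via the stable t-structure $(\sA,\sK_{\tac})$ from Theorem \ref{thm:A_Ktac_SB} and the observation that $\sK_{\tac}=0$ translates into every Gorenstein projective being projective; you simply spell out the intermediate step $\sK_{\tac}\subseteq\sK_{\prj}\Rightarrow\sK_{\tac}=0$ a bit more explicitly. The converse direction of the second biconditional also follows the paper (verify the three conditions of Lemma \ref{lem:Kiriko}; boundedness of the homology of $TC$ is argued slightly differently but equivalently). The genuine difference is in the forward direction of the second biconditional. The paper takes a projective resolution $C$ of an arbitrary module $M$, splits $C$ via the stable t-structure $(\sA,\sK_{\tac})$, concludes that $M$ has a projective resolution in $\sA$ and hence finite Gorenstein projective dimension by \cite[thm.\ 4.1]{CFH}, and then appeals to the classical characterisation that $R$ is Gorenstein when every module has finite G-projective dimension. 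You instead take an arbitrary acyclic complex $C$ of projectives, observe $pC=0$ (since $pC$ is K-projective and acyclic, hence contractible), and read off from Lemma \ref{lem:Kiriko}(ii) that $C\cong aC$ is totally acyclic, then invoke \cite[cor.\ (5.5)]{IK}. Your route has the aesthetic advantage of using the same Iyengar--Krause corollary in both directions of the biconditional and never needing \cite{CFH}; the paper's route yields the intermediate information that each module has a projective resolution in $\sA$. Both are sound. Finally, the caveat you raise at the end is not a real obstacle: \cite[cor.\ (5.5)]{IK} is stated as an equivalence in exactly the present setting (commutative noetherian with dualizing complex, arbitrary — not just finitely generated — modules), and the paper itself cites this exact result in the converse direction.
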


\begin{proof}
The first inclusion is clear from the definition of $\sA^{\s}$, and
the second holds by Proposition \ref{pro:subcats}.

The claim on the first inclusion:  The first stable t-structure of
Theorem \ref{thm:A_Ktac_SB} shows that $\sA^{\s} = \sA$ is equivalent
to $\sK_{\tac} = 0$.  This happens if and only if each totally acyclic
complex is split exact, that is, if and only if each Gorenstein
projective module is projective.

The claim on the second inclusion:  First, suppose that $\sA^{\s} =
\sK^{(\b)}$.  Let $M$ be an $R$-module with projective resolution $C$;
it follows that $C$ is in $\sA^{\s}$.  Consider the distinguished
triangle $A \rightarrow C \rightarrow U \rightarrow$ with $A$ in $\sA$
and $U$ in $\sK_{\tac}$ which exists by Theorem \ref{thm:A_Ktac_SB}.
Since $U$ is exact, the homology of $A$ is $M$ so the $K$-projective
complex $A$ is a projective resolution of $M$.  This shows that for
each module $M$, the projective resolution is in $\sA$, hence the
Gorenstein projective dimension of $M$ is finite by \cite[thm.\
4.1]{CFH}, and hence $R$ is Gorenstein.

Secondly, suppose that $R$ is Gorenstein and let $C$ be in
$\sK^{(\b)}$.  We will show that $C$ is in $\sA^{\s}$ by showing that
$C$ satisfies the three conditions of Lemma \ref{lem:Kiriko}.

In condition (i), by definition, $C$ has bounded homology.  Since $R$
is Gorenstein, $D$ can be taken to be an injective resolution of $R$.
Hence there is a quasi-isomorphism $R \rightarrow D$ of bounded
complexes, and since $C$ consists of projective modules, it follows
that there is a quasi-isomorphism $R \otimes_R C \rightarrow D
\otimes_R C$.  So $TC = D \otimes_R C$ also has bounded homology.

Conditions (ii) and (iii) hold because the relevant mapping cones are
acyclic, and over a Gorenstein ring this implies that they are
totally acyclic; see \cite[cor.\ (5.5)]{IK}.
\end{proof}

In the following theorem, note that $\sK_{\tac}$ is a triangulated
subcategory of $\sA^{\s}$ which can also be viewed as a triangulated
subcategory of the Verdier quotient $\sA^{\s} / \sK^{\b}$ since there
are only zero morphisms from $\sK^{\b}$ to $\sK_{\tac}$.

\begin{Theorem}
\label{thm:A_Ktac_SB2}
The category $\sA^{\s}/\sK^{\b}$ has a triangle of recollements
\[
  (\sA/\sK^{\b} \, , \, \sK_{\tac} \, , \, S(\sB)/\sK^{\b}).
\]
That is, it has stable t-structures
\[
  (\sA/\sK^{\b} \, , \, \sK_{\tac}) \; , \;
  (\sK_{\tac} \, , \, S(\sB)/\sK^{\b}) \; , \;
  (S(\sB)/\sK^{\b} \, , \, \sA/\sK^{\b}).
\]
\end{Theorem}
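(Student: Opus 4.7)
The plan is to establish the three stable t-structures $(\sA/\sK^{\b}, \sK_{\tac})$, $(\sK_{\tac}, S(\sB)/\sK^{\b})$, and $(S(\sB)/\sK^{\b}, \sA/\sK^{\b})$ that together constitute the triangle of recollements. First I would verify the basic compatibilities $\sK^{\b} \subseteq \sA \cap S(\sB)$ and $\sK^{\b} \cap \sK_{\tac} = 0$: membership $\sK^{\b} \subseteq \sA$ is routine, and $\sK^{\b} \subseteq S(\sB)$ follows from the equivalence $\sA \simeq \sB$ of Remark \ref{rmk:AB}, since any bounded complex $K$ of projectives has $TK \in \sB$ and thus $K \cong S(TK) \in S(\sB)$; the intersection vanishes because a bounded totally acyclic complex of projectives is K-projective and acyclic, hence contractible.

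For the first two t-structures I would use a general descent lemma: if $(\sU, \sV)$ is a stable t-structure on $\sT$ and $\sZ$ is a thick subcategory of $\sT$ contained in $\sU$ (respectively $\sV$), then $(\sU/\sZ, \sV)$ (respectively $(\sU, \sV/\sZ)$) is a stable t-structure on $\sT/\sZ$. Triangles descend directly. For the Hom vanishing in the first case, a right fraction $U \xleftarrow{s} W \to V$ with $\operatorname{cone}(s) \in \sZ \subseteq \sU$ sits in a triangle $\Sigma^{-1}\operatorname{cone}(s) \to W \to U$ that displays $W$ as an extension inside the triangulated subcategory $\sU$, so $W \in \sU$; then $\Hom_{\sT}(W, V) = 0$ by the original stable t-structure forces the fraction to be zero. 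The dual case is handled symmetrically with left fractions. Applying this to the two stable t-structures of Theorem \ref{thm:A_Ktac_SB}(i) on $\sA^{\s}$ with $\sZ = \sK^{\b}$ yields the first two stable t-structures on $\sA^{\s}/\sK^{\b}$.

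The substantive step is the third stable t-structure $(S(\sB)/\sK^{\b}, \sA/\sK^{\b})$. The required triangles are given for free by the definition $\sA^{\s} = S(\sB) * \sA$, which assigns to each $X \in \sA^{\s}$ a triangle $SB \to X \to A \to \Sigma SB$ descending to the quotient. The main obstacle is the Hom vanishing $\Hom_{\sA^{\s}/\sK^{\b}}(SB, A) = 0$ for $SB \in S(\sB)$, $A \in \sA$. Given a representing roof $SB \xleftarrow{s} W \xrightarrow{f} A$ with $\operatorname{cone}(s) \in \sK^{\b}$, I would first observe that the triangle $\Sigma^{-1}\operatorname{cone}(s) \to W \to SB$ has both outer terms in the triangulated subcategory $S(\sB)$ (using $\sK^{\b} \subseteq S(\sB)$), whence $W \in S(\sB)$. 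It then suffices to prove the key lemma that any chain map $f : W \to A$ with $W \in S(\sB)$ and $A \in \sA$ factors through an object of $\sK^{\b}$: choosing representatives $W$ left-bounded at $N$ (i.e.~$W^i = 0$ for $i < N$) and $A$ right-bounded at $M$ (i.e.~$A^i = 0$ for $i > M$), both of projectives, the corresponding chain map has nonzero components only in the finite window $[N, M]$ and factors through the brutal truncation of $A$ to this window, a bounded complex of projectives and hence an object of $\sK^{\b}$. This renders the roof zero in the quotient and completes the verification of the third stable t-structure, thereby establishing the triangle of recollements.
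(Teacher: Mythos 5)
Your proposal is correct and follows essentially the same route as the paper: both reduce the first two stable t-structures to those of Theorem \ref{thm:A_Ktac_SB} via a quotient-descent argument (which the paper delegates to \cite{IKM} and you spell out), and both get the third one from $\sA^{\s} = S(\sB) * \sA$ together with the observation that a map from a left-bounded complex of projectives to a right-bounded one factors through its bounded window. The only cosmetic difference is that you represent the morphism in the Verdier quotient by a span $SB \leftarrow W \to A$ (pushing $W$ into $S(\sB)$ via $\sK^{\b} \subseteq S(\sB)$), whereas the paper uses a cospan $SB \to A' \leftarrow A$ (pushing $A'$ into $\sA$ via $\sK^{\b} \subseteq \sA$); these are dual forms of the same argument.
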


\begin{proof}
The first two stable t-structures follow from the stable
t-struc\-tu\-res of Theorem \ref{thm:A_Ktac_SB} by \cite{IKM}. 

Let us show that the third structure exists.  By definition, $\sA^{\s}
= S(\sB) * \sA$, and this implies $\sA^{\s}/\sK^{\b} =
(S(\sB)/\sK^{\b}) * (\sA/\sK^{\b})$.

It is therefore enough to show that each morphism $S(B) \rightarrow A$
in $\sK^{(\b)}/\sK^{\b}$ with $S(B)$ in $S(\sB)/\sK^{\b}$ and $A$ in
$\sA/\sK^{\b}$ must be zero.  Such a morphism is represented by a
diagram $S(B) \rightarrow A^{\prime} \leftarrow A$ in $\sK^{(\b)}$
where the mapping cone of $A \rightarrow A^{\prime}$ is in $\sK^{\b}$.
In particular, the mapping cone is in $\sA$, so $A^{\prime}$ is also
in $\sA$ whence $A^{\prime}$ is isomorphic to a right-bounded complex
of projective modules.  However, $S(B)$ is isomorphic to a
left-bounded complex of projective modules, and it easily follows that
the morphism $S(B) \rightarrow A^{\prime}$ factors through an object
of $\sK^{\b}$.  Hence this morphism becomes zero in
$\sK^{(\b)}/\sK^{\b}$, and so the original morphism $S(B) \rightarrow
A$ in $\sK^{(\b)}/\sK^{\b}$ is zero as desired.
\end{proof}

\section{The category of homomorphisms}
\label{sec:Morph}

This section proves our main result, Theorem A from the Introduction
(Theorem \ref{thm:main}).

\begin{Definition}
We let $\Morph$ denote the category of homomorphisms of $R$-modules.

The objects of $\Morph$ are the homomorphisms of $R$-modules.

The morphisms of $\Morph$ are defined as follows: A morphism $f$ from
$X _\alpha \stackrel{\alpha}{\rightarrow} T_\alpha$ to $X_\beta
\stackrel{\beta}{\rightarrow} T_\beta$ is a pair $(f_X , f_T)$ of
homomorphisms of $R$-modules $X_\alpha \stackrel{f_X}{\rightarrow}
X_\beta$ and $T_\alpha \stackrel{f_T}{\rightarrow} T_\beta$ such that
there is a commutative square
\[
  \xymatrix{
    X_{\alpha} \ar[r]^{f_X} \ar[d]_{\alpha} & X_{\beta} \ar[d]^{\beta} \\
    T_{\alpha} \ar[r]_{f_T} & T_{\beta} \lefteqn{.}
           }
\]
\end{Definition}

\begin{Remark}
\label{rmk:induced_diagram}
Given an object $X _\alpha \stackrel{\alpha}{\rightarrow} T_\alpha$ in
$\Morph$, we will denote the cokernel of $\alpha$ by $N_{\alpha}$.

Observe that a morphism $f$ in $\Morph$ induces a commutative
diagram of $R$-modules with exact rows,
\[
  \xymatrix{
     X_{\alpha} \ar[r]^{\alpha} \ar[d]_{f_X} & T_{\alpha} \ar[r] \ar[d]_{f_T}  & N_{\alpha} \ar[r] \ar[d]^{f_N} & 0 \\
     X_{\beta} \ar[r]_{\beta}  & T_{\beta} \ar[r]   & N_{\beta} \ar[r] & 0\lefteqn{.}
           } 
\] 
\end{Remark}

\begin{Remark}
\label{rmk:pi}
A complex $\pi = \cdots \to \pi ^i \stackrel{d_\pi ^i}{\to} \pi^{i+1}
\cdots$ in $\Morph$ implies a chain map $\pi$ between complexes of
$R$-modules, 
 \[
  \xymatrix{
    \cdots \ar[r]  & X_{\pi ^i} \ar[r] \ar[d]_{\pi ^i} & X_{\pi ^{i+1}} \ar[r] \ar[d]_{\pi ^{i+1}}  & \cdots \\
    \cdots \ar[r]  & T_{\pi ^i} \ar[r] & T_{\pi ^{i+1}} \ar[r]   & \cdots \lefteqn{.}   }
\]
It is not hard to check that the projective objects of $\Morph$ are
precisely the split injections between projective $R$-modules.  Hence,
if $\pi$ is a complex of projective objects in $\Morph$, then there
is an exact sequence
\begin{equation}
\label{equ:XTN}
  0 \to X_\pi \stackrel{\pi}{\to} T_\pi \to N_\pi \to 0
\end{equation}
of complexes of projective $R$-modules.
\end{Remark}

The proof of the following lemma is straightforward.

\begin{Lemma}
\label{dual}
Let $\alpha \stackrel{f}{\rightarrow} \beta$ be a morphism in the
category $\Morph$.  Let $M$ be an $R$-module and consider the zero
homomorphism $0 \stackrel{0^M}{\rightarrow} M$ and the identity $M
\stackrel{1_M}{\rightarrow} M$ as objects of $\Morph$.  Then we have the
following. 

\begin{enumerate}

  \item  There are vertical isomorphisms giving a commutative square 
\[
  \xymatrix {  
    \Hom_R(N_\beta , M) \ar[rrr]^{\Hom_R(f_N , M)} \ar[d]_{\cong} &&&
      \Hom_R(N_\alpha , M) \ar[d]^{\cong} \\
    \Hom_{\Morph}(\beta , 0^M) \ar[rrr]_{\Hom_{\Morph}(f , 0^M)} &&&
      \Hom_{\Morph}(\alpha , 0^M) \lefteqn{.} \\
            }
\]

\smallskip

  \item  There are vertical isomorphisms giving a commutative square 
\[
  \xymatrix {  
    \Hom_R(T_\beta , M) \ar[rrr]^{\Hom_R(f_T , M)} \ar[d]_{\cong} &&&
      \Hom_R(T_\alpha , M) \ar[d]^{\cong} \\
    \Hom_{\Morph}(\beta , 1_M) \ar[rrr]_{\Hom_{\Morph}(f , 1_M)} &&&
      \Hom_{\Morph}(\alpha , 1_M) \lefteqn{.} \\
            }
\]

\end{enumerate}
\end{Lemma}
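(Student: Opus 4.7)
The plan is to unwind the definitions in \ref{rmk:induced_diagram}: a morphism $\alpha \to 0^M$ or $\alpha \to 1_M$ in $\Morph$ is a commutative square in which one of the vertical arrows is constrained to land in $0$ or to come from $M$ via the identity. In each case the square collapses to the data of a single $R$-linear map, and this identification is what produces the vertical isomorphisms.

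For part (i), I would observe that a morphism $\alpha \to 0^M$ in $\Morph$ is a pair $(g_X,g_T)$ with $g_X\colon X_\alpha \to 0$ (so necessarily zero) and $g_T\colon T_\alpha \to M$; the commutativity condition $g_T\circ\alpha = 0^M\circ g_X = 0$ means exactly that $g_T$ vanishes on the image of $\alpha$ and so factors uniquely through $N_\alpha = \Coker\alpha$. This gives a natural bijection $\Hom_{\Morph}(\alpha,0^M)\cong\Hom_R(N_\alpha,M)$, and similarly for $\beta$. To check the square commutes one chases $g \in \Hom_R(N_\beta,M)$ around both ways and uses the right-hand square of Remark \ref{rmk:induced_diagram}, which says precisely that $f_N$ is the map induced on cokernels by $(f_X,f_T)$; both routes therefore produce the composite $g\circ f_N$, viewed either as a map $N_\alpha \to M$ or, equivalently, as the $T$-component of a morphism $\alpha \to 0^M$.

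For part (ii), a morphism $\alpha \to 1_M$ is a pair $(g_X,g_T)$ subject to $g_X = 1_M\circ g_X = g_T\circ\alpha$, so $g_X$ is determined by $g_T$ and the only free datum is $g_T\colon T_\alpha \to M$. This gives the natural isomorphism $\Hom_{\Morph}(\alpha,1_M)\cong\Hom_R(T_\alpha,M)$, and similarly for $\beta$. The commutativity of the square then reduces to the tautology that precomposition with the $T$-component $f_T$ of $f$ corresponds on both sides to the same restriction operation on maps out of the $T$-entry.

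There is no real obstacle here; the entire content is definition-chasing, so the ``hard'' step is just being careful that the naturality squares commute, which in both cases reduces to the fact that the $T$-component and the induced cokernel component of a morphism in $\Morph$ are functorial in $\alpha$ by construction.
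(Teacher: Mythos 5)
Your proof is correct and is exactly the definition-chasing argument the paper declines to spell out (the paper merely remarks that ``the proof of the following lemma is straightforward''). The identifications $\Hom_{\Morph}(\alpha,0^M)\cong\Hom_R(N_\alpha,M)$ via the universal property of the cokernel, and $\Hom_{\Morph}(\alpha,1_M)\cong\Hom_R(T_\alpha,M)$ via the observation that the $X$-component is forced to be $g_T\circ\alpha$, are the intended ones, and your commutativity check via the right-hand square of Remark~\ref{rmk:induced_diagram} is the right reduction.
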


\begin{Lemma}
\label{tac of Morph}
A complex $\pi$ of projective objects in $\Morph$ is totally acyclic
if and only if each of the complexes
\begin{align*}
  X_\pi & = \cdots 
            \longrightarrow X_{\pi ^i}
            \longrightarrow X_{\pi ^{i+1}}
            \longrightarrow \cdots, \\
  T_\pi & = \cdots
            \longrightarrow T_{\pi ^i}
            \longrightarrow T_{\pi ^{i+1}}
            \longrightarrow \cdots
\end{align*}
belongs to $\sK_{\tac}$.
\end{Lemma}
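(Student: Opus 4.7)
The plan is to reduce total acyclicity of $\pi$ in $\Morph$ to two separate conditions about $X_\pi$, $T_\pi$, and $N_\pi$, and then to use the termwise split short exact sequence \eqref{equ:XTN} to tie them together.

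First I would pin down the projective objects of $\Morph$. As observed in Remark \ref{rmk:pi}, they are exactly the split injections between projective $R$-modules, and each such object decomposes as a direct sum of objects of the two basic types $1_M : M \to M$ and $0^N : 0 \to N$ with $M,N$ projective $R$-modules. Hence, checking exactness of $\Hom_\Morph(\pi,Q)$ for every projective $Q$ in $\Morph$ is equivalent to checking it just for the objects $1_M$ and $0^N$ with $M,N$ ranging over projective $R$-modules. Next, exactness of $\pi$ as a complex in $\Morph$ is, by the definition of kernels and cokernels in this category, equivalent to the simultaneous exactness of $X_\pi$ and $T_\pi$ as complexes of $R$-modules.

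The key translation step is then Lemma \ref{dual}, which gives natural isomorphisms of complexes of abelian groups
\[
  \Hom_\Morph(\pi,1_M) \cong \Hom_R(T_\pi,M)
  \quad\text{and}\quad
  \Hom_\Morph(\pi,0^N) \cong \Hom_R(N_\pi,N).
\]
So total acyclicity of $\pi$ in $\Morph$ amounts to: $X_\pi$ and $T_\pi$ are exact, and $\Hom_R(T_\pi,M)$ and $\Hom_R(N_\pi,M)$ are exact for every projective $R$-module $M$. On the other side, $X_\pi,T_\pi \in \sK_{\tac}$ amounts to: $X_\pi$ and $T_\pi$ are exact, and $\Hom_R(X_\pi,M)$ and $\Hom_R(T_\pi,M)$ are exact for every projective $R$-module $M$.

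To bridge the remaining discrepancy I would invoke the short exact sequence \eqref{equ:XTN}. Since each $\pi^i$ is a split injection of $R$-modules, this sequence is termwise split, so applying $\Hom_R(-,M)$ yields a termwise split short exact sequence of complexes
\[
  0 \to \Hom_R(N_\pi,M) \to \Hom_R(T_\pi,M) \to \Hom_R(X_\pi,M) \to 0.
\]
Whenever two of these three complexes are exact, so is the third. This immediately converts the "totally acyclic in $\Morph$" conditions into the "$X_\pi, T_\pi \in \sK_{\tac}$" conditions, and vice versa, giving both directions of the equivalence. No serious obstacle is expected; the only thing to be careful about is the bookkeeping of which projectives $M,N$ are used to test which complex, and the use of Lemma \ref{dual} to identify the Hom-complexes correctly.
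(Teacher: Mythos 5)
Your proof is correct and follows essentially the same route as the paper's: decompose projective objects of $\Morph$ into summands of the two basic shapes $1_M$ and $0^N$, use Lemma \ref{dual} to translate $\Hom_\Morph$ into $\Hom_R(T_\pi,-)$ and $\Hom_R(N_\pi,-)$, and then invoke the termwise split sequence \eqref{equ:XTN} to trade $N_\pi$ for $X_\pi$. The only (mild) difference is presentational: you make the exactness-of-$\pi$ bookkeeping and the application of $\Hom_R(-,M)$ to \eqref{equ:XTN} explicit, whereas the paper compresses these into the single statement that $T_\pi,N_\pi$ totally acyclic is equivalent to $T_\pi,X_\pi$ totally acyclic.
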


\begin{proof}
Let $\varphi$ be a projective object of $\Morph$.  Remark \ref{rmk:pi}
says that $\varphi$ is a split injection of projective $R$-modules, so
there are projective $R$-modules $P$ and $P^{\prime}$ such that
$\varphi = 0^P \oplus 1_{P^{\prime}}$.  The complex $\Hom_{\Morph}(\pi
, \varphi)$ is acyclic if and only if both $\Hom_{\Morph}(\pi , 0^P)$
and $\Hom_{\Morph}(\pi , 1_{P^{\prime}})$ are acyclic.  By Lemma
\ref{dual}, this is equivalent to having both complexes $\Hom_{R}(T_\pi , P)$
and $\Hom_{R}(N_\pi , P^{\prime})$ acyclic.

Therefore $\pi$ is totally acyclic if and only if $T_\pi$ and $N_\pi$
are both totally acyclic, which by the sequence \eqref{equ:XTN} is
equivalent to both of $T_\pi$ and $X_\pi$ being totally acyclic.
\end{proof}

\begin{Corollary}
The Gorenstein projective objects of $\Morph$ are the injective
homomorphisms between Gorenstein projective $R$-modules which have
Gorenstein projective cokernels.
\end{Corollary}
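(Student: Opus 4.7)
The plan is to combine Lemma \ref{tac of Morph} with the standard description of Gorenstein projective objects in an abelian category with enough projectives: they are precisely the $0$-th cocycle modules of totally acyclic complexes of projectives.

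For the forward direction, suppose $\varphi \colon X_\varphi \to T_\varphi$ is Gorenstein projective in $\Morph$, so $\varphi$ is isomorphic to the $0$-th cocycle of some totally acyclic complex $\pi$ of projective objects of $\Morph$. By Lemma \ref{tac of Morph} both $X_\pi$ and $T_\pi$ lie in $\sK_{\tac}$, and since the component functors $(-)_X, (-)_T \colon \Morph \to \Mod\,R$ are exact, $X_\varphi$ and $T_\varphi$ appear as the $0$-th cocycles of $X_\pi$ and $T_\pi$ respectively, so both are Gorenstein projective $R$-modules. The sequence \eqref{equ:XTN} is a degreewise split short exact sequence of complexes of projective modules and induces a distinguished triangle $X_\pi \to T_\pi \to N_\pi \to$ in $\sK$, so $N_\pi$ is again in $\sK_{\tac}$. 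Because $X_\pi$ is acyclic, a short diagram chase shows that passing to $0$-th cocycles in \eqref{equ:XTN} yields a short exact sequence in $\Mod\,R$ whose first map is $\varphi$ and whose third term is the $0$-th cocycle of $N_\pi$. Hence $\varphi$ is injective with Gorenstein projective cokernel.

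For the converse, suppose $\varphi \colon X \hookrightarrow T$ is an injection between Gorenstein projective $R$-modules whose cokernel $N$ is also Gorenstein projective. Pick totally acyclic complexes of projectives $X_\pi$ and $N_\pi$ having $X$ and $N$ as $0$-th cocycles. I assemble a totally acyclic $T_\pi$ by a two-sided horseshoe construction, taking $T_{\pi^i} = X_{\pi^i} \oplus N_{\pi^i}$ and defining the differentials in upper-triangular form so that the evident degreewise split sequence $0 \to X_\pi \to T_\pi \to N_\pi \to 0$ is exact and its $0$-th cocycle sequence is the prescribed $0 \to X \to T \to N \to 0$. Projectivity of the $N_{\pi^i}$ and the Ext-computing nature of the complete projective resolution $X_\pi$ provide the lifts needed to define the off-diagonal component; $T_\pi$ is then in $\sK_{\tac}$ because $X_\pi$ and $N_\pi$ are and $\sK_{\tac}$ is a triangulated subcategory. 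Letting $\pi^i$ be the split inclusion $X_{\pi^i} \hookrightarrow T_{\pi^i}$ yields a complex $\pi$ of projective objects of $\Morph$; by Lemma \ref{tac of Morph} it is totally acyclic, and by construction $\varphi$ is its $0$-th cocycle.

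The main obstacle is the horseshoe-type construction in the converse: one must choose the upper-triangular entries of the differentials so that the connecting map $N_\pi \to X_\pi[1]$ in $\sK$ represents, in $\Ext^1_R(N, X)$, precisely the class of the given extension $0 \to X \to T \to N \to 0$. Once a matching lift is fixed in a single degree, the remaining entries can be built inductively using projectivity of the $N_{\pi^i}$ on one side and exactness of $X_\pi$ on the other.
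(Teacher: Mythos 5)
Your proposal is correct and follows essentially the same route as the paper: both directions turn on Lemma \ref{tac of Morph}, the forward direction by reading off the cocycle information from the exact sequence \eqref{equ:XTN}, and the converse by the (two-sided) Horseshoe construction applied to the given short exact sequence of Gorenstein projectives, which is exactly what the paper invokes as ``the Horseshoe Lemma'' to produce a short exact sequence of complete projective resolutions whose first map is then recognized, via Lemma \ref{tac of Morph}, as a totally acyclic complex of projective objects of $\Morph$ resolving $\varphi$. Your extra remark about arranging the connecting map to represent the given extension class in $\Ext^1_R(N,X)$ makes explicit a point the paper leaves implicit in the phrase ``Horseshoe Lemma,'' but it is the same argument.
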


\begin{proof}  
A Gorenstein projective object in $\Morph$ is a cycle of a totally
acyclic complex of projective objects of $\Morph$.  It follows
easily from Lemma \ref{tac of Morph} that it is an injective
homomorphism between Gorenstein projective $R$-modules, and that the
cokernel is Gorenstein projective.

Conversely, let $X_\alpha$ and $T_\alpha$ be Gorenstein projective
$R$-modules and suppose that $X_\alpha \stackrel{\alpha}{\to}
T_\alpha$ is an injective homomorphism with Gorenstein projective
cokernel.  Using the Horseshoe Lemma, the short exact sequence $0
\rightarrow X_{\alpha} \stackrel{\alpha}{\rightarrow} T_{\alpha}
\rightarrow N_{\alpha} \rightarrow 0$ gives a short exact sequence of
complete projective resolutions
\[
  0 \longrightarrow P_{X_\alpha}
    \stackrel{\pi_\alpha}{\longrightarrow} P_{T_\alpha}
    \longrightarrow P_{N_{\alpha}}
    \longrightarrow 0.
\]
Lemma \ref{tac of Morph} says that $P_{X_\alpha}
\stackrel{\pi_\alpha}{\longrightarrow} P_{T_\alpha}$ can be viewed as a
totally acyclic complex of projective objects of $\Morph$, and it
is clear that it is a complete projective resolution of $X_\alpha
\stackrel{\alpha}{\to} T_\alpha$ which is hence a Gorenstein
projective object of $\Morph$.
\end{proof} 

\begin{Definition}
We denote the full subcategory of Gorenstein projective objects in
$\Morph$ by $\GMor$.  Inside $\GMor$, we consider the
following full subcategories $\GMor^p$, $\GMor^0$, and
$\GMor^1$. 
\begin{enumerate}

  \item $\GMor^p$ consists of injective homomorphisms $X
  \stackrel{\iota_X}{\rightarrow} P$ where $X$ is Gorenstein
  projective and $P$ is projective.
  
\smallskip

  \item $\GMor^0$ consists of zero homomorphisms $0
  \stackrel{0^T}{\rightarrow} T$ where $T$ is Gorenstein projective.
  
\smallskip

  \item $\GMor^1$ consists of identity homomorphisms $X
  \stackrel{1_M}{\rightarrow} X$ where $X$ is Gorenstein projective.

\end{enumerate}
There are corresponding stable categories which are defined by
di\-vi\-ding out the morphisms which factor through a projective
object.  The stable categories are denoted by underlining.  The
category $\underline{\GMor}$ is triangulated, and
$\underline{\GMor}^p$, $\underline{\GMor}^0$, and
$\underline{\GMor}^1$ are triangulated subcategories.
\end{Definition}

\begin{Theorem} 
The category $\underline{\GMor}$ has a triangle of recollements
\[
  (\underline{\GMor}^p
   \, , \, \underline{\GMor}^1
   \, , \, \underline{\GMor}^0).
\]
That is, it has stable t-structures 
\[
  (\underline{\GMor}^p \, , \, \underline{\GMor}^1) \; , \;
  (\underline{\GMor}^1 \, , \, \underline{\GMor}^0) \; , \;
  (\underline{\GMor}^0 \, , \, \underline{\GMor}^p).
\]
\end{Theorem}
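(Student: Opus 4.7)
The plan is to establish each of the three stable t-structures by verifying Hom-vanishing and triangle existence; the $\Sigma$-stability of the three subcategories is already granted by the definition. The Hom-vanishings are formal consequences of the characterisation in Remark \ref{rmk:pi} of projective objects of $\Morph$ as split injections between projective $R$-modules. A morphism $0^T \to (X \hookrightarrow P)$ is determined by a homomorphism $T \to P$ and factors as $0^T \to (0 \hookrightarrow P) \to (X \hookrightarrow P)$ via the identity on $P$, where $0 \hookrightarrow P$ is a projective object of $\Morph$; a morphism $(X \hookrightarrow P) \to 1_Y$ factors through $1_P$ analogously; and $\Hom_{\Morph}(1_X, 0^T)$ already vanishes on the nose, since the top component must be zero and commutativity then forces the bottom component to vanish as well.

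Each of the three triangles will be produced from a $3 \times 3$ short exact sequence in $\GMor$ (that is, a short exact sequence in $\Morph$ whose three columns all lie in $\GMor$). For $(\underline{\GMor}^1, \underline{\GMor}^0)$ the obvious diagram with top row $X_\alpha = X_\alpha$ and bottom row $0 \to X_\alpha \xrightarrow{\alpha} T_\alpha \to N_\alpha \to 0$ yields the SES $0 \to 1_{X_\alpha} \to \alpha \to 0^{N_\alpha} \to 0$, valid in $\GMor$ because $N_\alpha$ is Gorenstein projective by the preceding corollary. For $(\underline{\GMor}^p, \underline{\GMor}^1)$ I would choose a short exact sequence $0 \to K \to P_T \to T_\alpha \to 0$ with $P_T$ projective and $K$ Gorenstein projective (possible since $T_\alpha$ is GP), and pull it back along $\alpha$ to obtain $0 \to K \to E \to X_\alpha \to 0$ with $E$ Gorenstein projective; the resulting $3 \times 3$ diagram is a SES $0 \to 1_K \to (E \hookrightarrow P_T) \to \alpha \to 0$ in $\GMor$ with $(E \hookrightarrow P_T) \in \GMor^p$, whose associated triangle, after rotation, reads $(E \hookrightarrow P_T) \to \alpha \to \Sigma 1_K \to$ with $\Sigma 1_K \in \underline{\GMor}^1$.

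For $(\underline{\GMor}^0, \underline{\GMor}^p)$ I proceed dually: I would choose an embedding $f: T_\alpha \hookrightarrow P$ into a projective $P$ with cokernel $S$ Gorenstein projective (possible since $T_\alpha$ is GP) and set $\iota = f \circ \alpha: X_\alpha \hookrightarrow P$. The $3 \times 3$ diagram with trivial top row and bottom row $0 \to T_\alpha \xrightarrow{f} P \to S \to 0$ is a SES $0 \to \alpha \to \iota \to 0^S \to 0$ in $\GMor$; here $\iota \in \GMor^p$ because its cokernel $P/X_\alpha$ is an extension of $N_\alpha$ by $S$, both Gorenstein projective, and the class of Gorenstein projective modules is closed under extensions. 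Rotating the associated triangle yields $\Sigma^{-1} 0^S \to \alpha \to \iota \to$, whose first term lies in $\underline{\GMor}^0$. The main technical point I anticipate throughout is controlling the Gorenstein projectivity of the various cokernels that appear, which rests on closure of GP modules under extensions together with the vanishing of $\mathrm{Ext}^1$ from a Gorenstein projective into a projective that is implicit in the pullback and lifting arguments.
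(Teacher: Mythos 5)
Your proof is correct and follows essentially the same strategy as the paper's: for each of the three pairs exhibit a short exact sequence of Gorenstein projective objects of $\Morph$ whose outer terms lie in the prescribed subcategories, and read off the resulting triangle in the stable category. Your diagrams for $(\underline{\GMor}^1,\underline{\GMor}^0)$ and $(\underline{\GMor}^0,\underline{\GMor}^p)$ coincide with the paper's; for $(\underline{\GMor}^p,\underline{\GMor}^1)$ the paper instead uses the pushout-type embedding $T_\alpha\to T_\alpha\oplus P'$ while you pull a projective cover of $T_\alpha$ back along $\alpha$ -- both are valid and dual to one another. You also verify the three $\Hom$-vanishings explicitly, which the paper silently dispatches with ``it is enough to show...''; your factorisations through $1_P$ and $0^P$, and the on-the-nose vanishing of $\Hom_{\Morph}(1_X,0^T)$, are exactly the missing routine checks.
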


\begin{proof} 
It is enough to show that each of the following categories:  (i)
$\underline{\GMor}^p * \underline{\GMor}^1$, (ii) $\underline{\GMor}^1
* \underline{\GMor}^0$, and (iii) $\underline{\GMor}^0 *
\underline{\GMor}^p$ is equal to $\underline{\GMor}$.

Let $X_{\alpha} \stackrel{\alpha}{\to} T_{\alpha}$ be an object of
$\GMor$ and consider the exact sequence $0 \to X_{\alpha}
\stackrel{\alpha}{\to} T_{\alpha} \stackrel{\beta}{\to} N_\alpha \to
0$ of G-projective $R$-modules.  There exist injective homomorphisms
$\iota _{T_\alpha} : T_\alpha \to P$ and $\iota _{N_\alpha} :
N_\alpha \to P'$ with projective $R$-modules $P$ and $P'$.

(i) The commutative diagram with exact rows 
\[
  \xymatrix{
     0 \ar[rr] && X_{\alpha} \ar[rr]^{\alpha} \ar[d]_{\alpha} && T_{\alpha} \ar[rr]^{\beta} \ar[d]_{1 \choose 0} && N_\alpha \ar[rr] \ar[d]^{\iota_{N_\alpha}} && 0\\
     0 \ar[rr] && T_{\alpha} \ar[rr]_-{1 \choose -\iota_{N_{\alpha}}\beta} && T_{\alpha} \oplus P' \ar[rr]_-{(\iota_{N_{\alpha}}\beta\, , \,1)} && P' \ar[rr] && 0 \\
           } 
    \] 
induces a distinguished triangle in $\underline{\GMor}$  
\[
  \Sigma^{-1}\underline{\iota _{N_\alpha}}
  \to \underline{\alpha}  
  \to \underline{1_{T_\alpha}}
  \to
\]
with $\Sigma^{-1}\underline{\iota _{N_\alpha}}$ in
$\underline{\GMor}^p$ and $\underline{1_{T_\alpha}}$ in
$\underline{\GMor}^1$. 

(ii)  The commutative diagram with exact rows  
\[
  \xymatrix{
     0 \ar[r] & X_{\alpha} \ar@{=}[r] \ar@{=}[d] & X_{\alpha} \ar[r] \ar[d]_{\alpha}  & 0 \ar[r] \ar[d]^{0^{N_\alpha}} & 0\\
     0 \ar[r] & X_{\alpha} \ar[r]_{\alpha}  & T_\alpha \ar[r]_{\beta}   & N_\alpha \ar[r]  & 0\\
        } 
    \] 
induces a distinguished triangle in $\underline{\GMor}$  
\[
  \underline{1_{X_\alpha}}
  \to \underline{\alpha}
  \to \underline{0^{N_\alpha}}
  \to
\]
with $\underline{1_{X_\alpha}}$ in $\underline{\GMor}^1$ and
$\underline{0^{N_\alpha}}$ in $\underline{\GMor}^0$.

(iii) The commutative diagram with exact rows  
\[
  \xymatrix{
     0 \ar[r] & X_{\alpha} \ar@{=}[r] \ar[d]_{\alpha} & X_{\alpha} \ar[r] \ar[d]_{\iota _{T_\alpha} \alpha}  & 0 \ar[r] \ar[d]^{0^{\Sigma T_\alpha}} & 0\\
     0 \ar[r] & T_{\alpha} \ar[r]_{\iota _{T_{\alpha}}}  & P \ar[r]   & \Sigma T_\alpha \ar[r]  & 0\\
        } 
    \] 
induces a distinguished triangle in $\underline{\GMor}$  
\[
  \underline{0^{T_\alpha}}
  \to \underline{\alpha}
  \to \underline{\iota _{T_\alpha} \alpha}
  \to
\]
with $\underline{0^{T_\alpha}}$ in $\underline{\GMor}^0$ and
$\underline{\iota _{T_\alpha} \alpha}$ in $\underline{\GMor}^p$.
\end{proof} 

Let $X_{\alpha} \stackrel{\alpha}{\to} T_{\alpha}$ be an object of
$\GMor$ and consider complete projective resolutions $P$ of
$X_{\alpha}$ and $\widetilde{P}$ of $T_{\alpha}$.  In particular,
there is a surjection $P^0 \stackrel{\rho}{\rightarrow} X_{\alpha}$
and an injection $T_{\alpha} \stackrel{\iota}{\rightarrow}
\widetilde{P}^1$.  Let $P_{\alpha}$ denote the complex
\[
  \cdots
  \longrightarrow P^{-1}
  \longrightarrow P^0
  \stackrel{\iota\alpha\rho}{\longrightarrow} \widetilde{P}^1
  \longrightarrow \widetilde{P}^2
  \longrightarrow \cdots.
\]

\begin{Proposition}
[{\cite[lemmas 4.2 and 4.3 and prop.\ 4.4]{IKM}}]
The operation $\alpha \mapsto P_{\alpha}$ gives a functor $\GMor \to
\sA^s$ which induces a triangulated functor
\[
  \underline{P} : \underline{\GMor} \to \sA^s / \sK^b.
\]
\end{Proposition}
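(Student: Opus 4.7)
The plan is to establish four things in turn: (a) $P_\alpha$ lies in $\sA^{\s}$; (b) the rule $\alpha \mapsto P_\alpha$ extends to a functor $\GMor \to \sA^{\s}$; (c) this functor sends morphisms that factor through a projective object of $\Morph$ to morphisms that factor through $\sK^{\b}$, so that it descends to $\underline{P}\colon \underline{\GMor}\to \sA^{\s}/\sK^{\b}$; and (d) the induced functor is triangulated.

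Step (a) is immediate from the construction, since in degrees $\leq 0$ the complex $P_\alpha$ coincides with the totally acyclic complex $P$, while in degrees $\geq 1$ it coincides with the totally acyclic complex $\widetilde P$. By the characterization of $\sA^{\s}$ given in the preceding remark, as consisting of complexes of projectives whose tails are tails of complete projective resolutions, this gives $P_\alpha \in \sA^{\s}$. For step (b), given $f = (f_X,f_T)\colon \alpha \to \beta$, lift $f_X$ to a chain map $\varphi\colon P \to P'$ of complete projective resolutions, and lift $f_T$ to $\widetilde\varphi\colon \widetilde P \to \widetilde P'$; both lifts exist and are unique up to chain homotopy by the standard comparison theorem. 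Setting $P_f^{i} = \varphi^i$ for $i \leq 0$ and $P_f^{i} = \widetilde\varphi^i$ for $i \geq 1$, the only nontrivial chain-map condition is at the splicing degree, and the identity $\iota'\beta\rho'\varphi^0 = \widetilde\varphi^1\iota\alpha\rho$ follows from $\rho'\varphi^0 = f_X\rho$, $\widetilde\varphi^1\iota = \iota' f_T$, and the commutativity $\beta f_X = f_T\alpha$ of the defining square of $f$. Homotopy-uniqueness of the lifts makes $P_f$ well-defined in $\sK$, and functoriality is then routine.

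For step (c), every projective object of $\Morph$ is a direct sum of objects of the form $0^P$ and $1_{P'}$ with $P, P'$ projective $R$-modules, and a projective $R$-module admits a bounded complete projective resolution, concentrated in two consecutive degrees with identity differential. A direct calculation with these two building blocks shows that $P_\pi \in \sK^{\b}$ for every projective object $\pi$ of $\Morph$. Consequently, any morphism factoring as $\alpha \to \pi \to \beta$ is sent to a morphism factoring through an object of $\sK^{\b}$, which is zero in $\sA^{\s}/\sK^{\b}$; so $\underline{P}$ is well-defined.

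For step (d), distinguished triangles in $\underline{\GMor}$ arise from short exact sequences $0 \to \alpha \to \beta \to \gamma \to 0$ in $\GMor$. Applying the Horseshoe Lemma separately to the $X$- and $T$-rows produces short exact sequences of complete projective resolutions, which can be spliced compatibly into a degreewise split short exact sequence $0 \to P_\alpha \to P_\beta \to P_\gamma \to 0$ of complexes of projectives; this yields the required distinguished triangle in $\sK$, hence in $\sA^{\s}/\sK^{\b}$. Compatibility with suspension then falls out as the special case where $\beta$ is a projective object of $\Morph$. The main obstacle is arranging the Horseshoe-lifted resolutions so that the splice maps commute strictly with the induced chain maps rather than merely up to homotopy; this careful bookkeeping is the substance of \cite[lem.\ 4.2 and lem.\ 4.3]{IKM} and is what makes $\underline{P}$ triangulated on the nose rather than only after a further correction.
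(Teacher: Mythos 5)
The paper gives no proof of this proposition: it is stated as a citation to \cite[lemmas 4.2, 4.3 and prop.\ 4.4]{IKM}, so you are reconstructing an argument the paper leaves implicit. Your steps (a) and (c) and your verification of the splice-degree chain-map identity in (b) are correct and match the [IKM] architecture.

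There is, however, a genuine gap in step (b) where you assert that ``homotopy-uniqueness of the lifts makes $P_f$ well-defined in $\sK$.'' This does not follow from the homotopy-uniqueness of $\varphi$ and $\widetilde\varphi$ taken separately. Suppose $\varphi,\varphi'\colon P\to P'$ both lift $f_X$ with $\varphi-\varphi' = d's + sd$, and $\widetilde\varphi,\widetilde\varphi'\colon \widetilde P\to\widetilde P'$ both lift $f_T$ with $\widetilde\varphi-\widetilde\varphi' = \widetilde d'\widetilde s + \widetilde s\widetilde d$. To homotope the spliced maps $P_f$ and $P_f'$ one must produce a degree $-1$ map $h\colon P_\alpha\to P_\beta$. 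Taking $h^i=s^i$ for $i\le 0$ and $h^i=\widetilde s^i$ for $i\ge 2$ is forced by the gluing, but the remaining component $h^1\colon \widetilde P^1\to P'^0$ is supplied by \emph{neither} homotopy, and it must satisfy the two constraints $h^1(\iota\alpha\rho)=s^1 d^0$ and $(\iota'\beta\rho')h^1=\widetilde d'^{\,0}\widetilde s^1$ \emph{simultaneously}. Independently chosen $s$ and $\widetilde s$ will in general not admit such an $h^1$; constructing it (or, equivalently, choosing the lifts and homotopies compatibly across the splice) is exactly the nontrivial content of \cite[lems.\ 4.2--4.3]{IKM}. You correctly flag this bookkeeping problem in step (d), but the same deference is already required in step (b): without it, $P_f$ is not shown to be a well-defined morphism in $\sK$, and functoriality of $\alpha\mapsto P_\alpha$ into $\sA^{\s}$, let alone the triangulated structure of $\underline P$, is not established.
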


\begin{Lemma}
[{\cite[lemmas 4.6 and 4.7]{IKM}}]
\label{lem:IKM_4.5_and_4.6}
\begin{enumerate}

  \item $\underline{P}$ sends the triangle of recollements
\[
  (\underline{\GMor}^p \, , \, \underline{\GMor}^1 \, , \,
  \underline{\GMor}^0)
\]
to the triangle of re\-col\-le\-ments
\[
  ( \sA / \sK^b \, , \, \sK_{\tac} \, , \, S(\sB) / \sK^b ).
\]
      
\smallskip

  \item The restriction of $\underline{P}$ to $\underline{\GMor}^1$
  is an equivalence of triangulated categories $\underline{\GMor}^1
  \to \sK_{\tac}$.
   
\end{enumerate}
\end{Lemma}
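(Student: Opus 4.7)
The plan is to prove both parts by direct computation of $P_\alpha$ on representative objects, exploiting the freedom to pick complete projective resolutions conveniently: by the cited \cite[Prop.\ 4.4]{IKM}, $\underline{P}$ is a well-defined functor on the stable category, so any admissible choice determines the same object in $\sA^{\s}/\sK^{\b}$.

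For part (i), I would treat the three cases separately. If $\alpha = 1_X$ lies in $\underline{\GMor}^1$, I choose the same complete projective resolution $P_X$ on both sides; the composite $\iota \cdot 1_X \cdot \rho : P_X^0 \to P_X^1$ is then exactly the differential $d_{P_X}^0$, so $P_\alpha = P_X \in \sK_{\tac}$. If $\alpha = 0^T$ lies in $\underline{\GMor}^0$, I take the zero complex as the complete projective resolution of $X_\alpha = 0$; then $P_\alpha$ is the left-bounded complex $0 \to \widetilde{P}^1 \to \widetilde{P}^2 \to \cdots$ whose right tail coincides with that of the complete resolution $\widetilde{P}$ of $T$, so $P_\alpha \in S(\sB)$ by the description of $S(\sB)$ recorded in the Remark following the definition of $\sA^{\s}$. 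Finally, if $\alpha = \iota_X : X \hookrightarrow P$ lies in $\underline{\GMor}^p$, I choose the contractible disk $\widetilde{P} = (\,0 \to P \xrightarrow{1_P} P \to 0\,)$, concentrated in degrees $0$ and $1$, as the complete projective resolution of $P$; then $P_\alpha$ is right-bounded (vanishing in degrees $\geq 2$) with left tail equal to $P_X^{\leq 0}$, so $P_\alpha \in \sA$ by the analogous characterisation.

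For part (ii), I would observe that $X \mapsto 1_X$ defines an equivalence of Frobenius categories $\GProj(R) \stackrel{\simeq}{\to} \GMor^1$: morphisms $1_X \to 1_Y$ in $\Morph$ amount to a single module map $X \to Y$, and the projective objects on the two sides correspond (projective $R$-modules on the left, identities of projective $R$-modules on the right). Passing to stable categories yields an equivalence $\underline{\GProj}(R) \simeq \underline{\GMor}^1$, and under this identification $\underline{P}|_{\underline{\GMor}^1}$ becomes the classical equivalence $\underline{\GProj}(R) \stackrel{\simeq}{\to} \sK_{\tac}(\Prj\,R)$ sending a Gorenstein projective module to its complete projective resolution, already used implicitly in the discussion of (\ref{equ:A}) in the Introduction.

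The main obstacle is justifying that the special choices above actually compute $\underline{P}(\underline{\alpha})$: this relies on the well-definedness of $\underline{P}$ from the cited proposition, together with the comparison theorem for projective resolutions, which ensures that any two complete projective resolutions of the same module agree up to a contractible summand and so produce isomorphic $P_\alpha$ in the Verdier quotient $\sA^{\s}/\sK^{\b}$. Once this is in hand, the three inclusions of part (i) follow immediately, and part (ii) is then essentially formal.
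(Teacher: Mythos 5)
The paper does not supply its own proof of this lemma: it is imported verbatim from \cite[lemmas~4.6 and 4.7]{IKM}, so there is nothing in-text to compare your reconstruction against. Judged on its own merits, the reconstruction is correct. In part~(i) the three computations are sound: choosing matching complete resolutions on both sides of $1_X$ gives $\iota\cdot 1_X\cdot\rho=d^0_{P_X}$, so $P_{1_X}\cong P_X\in\sK_{\tac}$; resolving the zero source of $0^T$ by the zero complex gives $P_{0^T}\cong\widetilde P^{\geq 1}$, a left-bounded complex of projectives whose right tail is that of a complete projective resolution, hence an object of $S(\sB)$; and resolving the projective target of $\iota_X:X\hookrightarrow P$ by the contractible disk $(0\to P\stackrel{1}{\to}P\to 0)$ gives a right-bounded complex of projectives with the left tail of $P_X$, hence an object of $\sA$. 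The cited \cite[Prop.~4.4]{IKM} then licenses reading these off in $\sA^{\s}/\sK^{\b}$. In part~(ii), a morphism $1_X\to 1_Y$ in $\Morph$ is a single map $f:X\to Y$, and it factors through a projective object of $\Morph$ precisely when $f$ factors through a projective $R$-module, so $\underline{\GMor}^1\simeq\underline{\GProj}(R)$; composing with the classical equivalence $\underline{\GProj}(R)\simeq\sK_{\tac}$ identifies $\underline{P}\vert_{\underline{\GMor}^1}$ with it. One minor imprecision: saying that complete projective resolutions ``agree up to a contractible summand'' overstates the comparison theorem, which yields homotopy equivalence; but since you defer to \cite[Prop.~4.4]{IKM} for well-definedness in the Verdier quotient, nothing in the argument is lost.
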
 

\begin{Proposition}
[{\cite[Prop.\ 1.18]{IKM}}]
\label{pro:IKM_1.18}
Let $(\sU, \sV, \sW)$ and $(\sU', \sV', \sW')$ be triangles of
recollements in $\sT$ and $\sT'$ respectively.  Suppose the
triangulated functor $F: \sT \to \sT'$ sends $(\sU, \sV, \sW)$ to
$(\sU', \sV', \sW')$.  If the restriction $F \mid \sU$ is an
equivalence of triangulated categories, then so is $F$. 
\end{Proposition}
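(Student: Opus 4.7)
The plan is to reduce the statement to checking that $F$ is fully faithful and essentially surjective, using the combinatorial symmetry of a triangle of recollements to bootstrap the hypothesis on $F\mid\sU$ into analogous equivalences $F\mid\sV$ and $F\mid\sW$.

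First I would recall the standard fact that, for any stable t-structure $(\sX,\sY)$ in a triangulated category $\sT$, the composites $\sX \hookrightarrow \sT \to \sT/\sY$ and $\sY \hookrightarrow \sT \to \sT/\sX$ are both equivalences; the arguments use $\Hom(\sX,\sY)=0$ together with $\Sigma\sX=\sX$ and $\Sigma\sY=\sY$ to collapse the relevant roof calculation (one is the right adjoint of the first inclusion, the other the left adjoint of the second). Applied to the three stable t-structures making up $(\sU,\sV,\sW)$, this yields canonical equivalences $\sU\simeq\sT/\sV\simeq\sW$, $\sV\simeq\sT/\sU\simeq\sW$, and $\sV\simeq\sT/\sW\simeq\sU$, together with their $\sT'$-counterparts. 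Because $F$ is triangulated and sends $\sX$ into $\sX'$ for each $\sX\in\{\sU,\sV,\sW\}$, it descends to quotient functors $\bar F_\sX:\sT/\sX\to\sT'/\sX'$. Since $F\mid\sU$ represents both $\bar F_\sV$ and $\bar F_\sW$ under the identifications $\sU\simeq\sT/\sV$ and $\sU\simeq\sT/\sW$, the hypothesis that $F\mid\sU$ is an equivalence makes $\bar F_\sV$ and $\bar F_\sW$ equivalences; transporting back through $\sW\simeq\sT/\sV$ and $\sV\simeq\sT/\sW$ then shows that $F\mid\sW$ and $F\mid\sV$ are equivalences too.

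Next I would prove that $F$ is fully faithful on all of $\sT$. Given $T_1,T_2\in\sT$, apply $\Hom_\sT(-,T_2)$ to the $(\sU,\sV)$-decomposition triangle $U_1\to T_1\to V_1\to$; by the triangulated five-lemma it suffices to show that $\Hom_\sT(U_1,T_2)\to\Hom_{\sT'}(FU_1,FT_2)$ and $\Hom_\sT(V_1,T_2)\to\Hom_{\sT'}(FV_1,FT_2)$ are isomorphisms at all shifts. For the first, decompose $T_2$ via $(\sU,\sV)$ as $U_2\to T_2\to V_2\to$ and apply $\Hom_\sT(U_1,-)$: since $\Hom(\sU,\Sigma^n\sV)=0$ for all $n$, the long exact sequence collapses to $\Hom_\sT(U_1,T_2)\simeq\Hom_\sU(U_1,U_2)$, and the matching simplification on the $\sT'$-side combined with fully faithfulness of $F\mid\sU$ completes the check. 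For the second, one must instead decompose $T_2$ via $(\sV,\sW)$ as $V_2'\to T_2\to W_2\to$, so that $\Hom(\sV,\Sigma^n\sW)=0$ collapses $\Hom_\sT(V_1,T_2)$ to $\Hom_\sV(V_1,V_2')$; fully faithfulness of $F\mid\sV$ then finishes. This asymmetric choice of decomposition for $T_2$ is forced by the fact that only three of the six possible $\Hom$-orthogonalities hold in a triangle of recollements.

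Finally, essential surjectivity: for $T'\in\sT'$, take its $(\sU',\sV')$-decomposition $U'\to T'\to V'\to\Sigma U'$, lift $U'$ and $V'$ to $U\in\sU$ and $V\in\sV$ using the equivalences $F\mid\sU$ and $F\mid\sV$, use the full faithfulness of $F$ just established to lift the connecting map $V'\to\Sigma U'$ uniquely to $V\to\Sigma U$, and complete this to a distinguished triangle $U\to T\to V\to\Sigma U$ in $\sT$; applying $F$ produces a triangle isomorphic to $U'\to T'\to V'\to\Sigma U'$, giving $F(T)\simeq T'$. The main conceptual obstacle in this scheme is the opening Verdier-quotient bootstrap: the identifications $\sU\simeq\sT/\sV\simeq\sW$ and their analogues genuinely rely on the asymmetric interplay of the three orthogonalities, and one must keep careful track at each step of which stable t-structure is being invoked and in which direction the relevant $\Hom$ actually vanishes.
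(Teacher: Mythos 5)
The paper does not prove this proposition itself; it imports it verbatim from \cite[Prop.\ 1.18]{IKM}, so there is no in-paper proof to compare against. Judged on its own merits, your argument is correct and complete. The three ingredients all check out: (1) for a stable t-structure $(\sX,\sY)$ the inclusion $\sX\hookrightarrow\sT$ admits a right adjoint (the $\sX$-truncation), whose kernel is exactly $\sY$, which gives the claimed equivalences $\sX\simeq\sT/\sY$ and $\sY\simeq\sT/\sX$; combining the three t-structures and the induced quotient functors $\bar F_{\sX}$ correctly bootstraps the hypothesis on $F\mid\sU$ into $F\mid\sV$ and $F\mid\sW$ being equivalences. (2) The full-faithfulness reduction via the long exact $\Hom$-sequence and the five-lemma is sound, and you are right that the $(\sV,\sW)$-decomposition must be used for the $\sV_1$-against-$T_2$ direction, since $\Hom(\sV,\sU)$ need not vanish; a variant that decomposes $T_1$ via $(\sW,\sU)$ and $T_2$ via $(\sW,\sU)$ and $(\sU,\sV)$ would work equally well. (3) Essential surjectivity via lifting the two ends and the connecting map, then completing to a triangle, is the standard argument; the only small point worth making explicit is that the pair of isomorphisms $FU\cong U'$, $FV\cong V'$ is chosen compatibly with the connecting maps (which full faithfulness guarantees is possible), so that the rotation-and-TR3 extension produces an isomorphism $FT\cong T'$. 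Nothing is missing.
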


The following main theorem follows immediately by combining Lemma
\ref{lem:IKM_4.5_and_4.6} and Proposition \ref{pro:IKM_1.18}; compare
with \cite[lem.\ 4.7 and thm.\ 4.8]{IKM}.

\begin{Theorem} 
\label{thm:main}
The functor $\underline{P}$ is an equivalence of triangulated
categories
\[
  \underline{\GMor} \to \sA^s / \sK^b.
\]
\end{Theorem}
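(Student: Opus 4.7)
The plan is to apply Proposition \ref{pro:IKM_1.18} directly. That proposition reduces the question of whether a triangulated functor $F : \sT \to \sT'$ between triangulated categories equipped with triangles of recollements is an equivalence to verifying that its restriction to a single component is an equivalence. In the setting at hand I would take $F = \underline{P}$, with source the triangle of recollements $(\underline{\GMor}^p, \underline{\GMor}^1, \underline{\GMor}^0)$ on $\underline{\GMor}$, and target the triangle of recollements $(\sA/\sK^b, \sK_{\tac}, S(\sB)/\sK^b)$ on $\sA^s/\sK^b$.

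First I would invoke Lemma \ref{lem:IKM_4.5_and_4.6}(i) to confirm that $\underline{P}$ sends the source triangle of recollements to the target one, thereby fulfilling the compatibility hypothesis of Proposition \ref{pro:IKM_1.18}. Next, Lemma \ref{lem:IKM_4.5_and_4.6}(ii) already supplies an equivalence of triangulated categories $\underline{P}|_{\underline{\GMor}^1} \to \sK_{\tac}$. Since any triangle of recollements can be cyclically permuted, I would relabel so that $\underline{\GMor}^1$ appears as the first component of the source triple and $\sK_{\tac}$ as the first component of the target triple. Proposition \ref{pro:IKM_1.18} then applies verbatim and concludes that $\underline{P}$ is an equivalence.

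There is essentially no new obstacle at this final step; the real work has already been discharged in the preceding results. The genuine difficulties lay upstream, in constructing the functor $\underline{P}$ carefully via the splicing of complete projective resolutions through $\iota \alpha \rho : P^0 \to \widetilde{P}^1$ and checking that this construction lands in $\sA^s$ and descends to morphisms in $\sA^s/\sK^b$, and in verifying that the restriction of $\underline{P}$ to $\underline{\GMor}^1$ surjects onto $\sK_{\tac}$---which amounts to recognising each totally acyclic complex of projectives as the complete projective resolution of the identity homomorphism on its zeroth syzygy. With those inputs in hand, the theorem follows as a purely formal consequence of the recollement machinery.
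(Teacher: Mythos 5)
Your proposal is correct and follows essentially the same route as the paper, which simply combines Lemma \ref{lem:IKM_4.5_and_4.6} with Proposition \ref{pro:IKM_1.18}. You even make explicit the minor point the paper leaves implicit — that one cyclically permutes the triangles of recollements so the known equivalence $\underline{\GMor}^1 \to \sK_{\tac}$ sits in the first slot, as required by the hypotheses of Proposition \ref{pro:IKM_1.18}.
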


\end{document}